\documentclass{article}

\usepackage[preprint]{neurips_2026}

\usepackage[utf8]{inputenc}
\usepackage[T1]{fontenc}
\usepackage{hyperref}
\usepackage{url}
\usepackage{booktabs}
\usepackage{multirow}
\usepackage{amsfonts}
\usepackage{nicefrac}
\usepackage[export]{adjustbox}
\usepackage{subcaption}
\usepackage{mathtools}
\usepackage{amsthm}
\usepackage{mathrsfs}
\usepackage{bm}
\usepackage{mleftright}
\usepackage{derivative}
\usepackage[nocomma,short]{optidef}
\usepackage{xcolor}
\usepackage{cleveref}
\usepackage{algorithm}
\usepackage{algpseudocodex}
\usepackage[final]{microtype}     
\usepackage{comment}

\title{A Unified Approach for Computing Wasserstein Barycenters of Discrete and Continuous Measures}

\author{%
  Peng Xu \\
  Department of Statistics\\
  University of Illinois Urbana-Champaign\\
  \texttt{pengxu1@illinois.edu} \\
  \And
  Changbo Zhu \\
  Department of ACMS \\
  University of Notre Dame \\
  \texttt{czhu4@nd.edu} \\
  \And
  Xiaohui Chen \\
  Department of Mathematics\\
  Thomas Lord Department of Computer Science \\
  University of Southern California \\
  \texttt{xiaohuic@usc.edu}
}

\theoremstyle{plain}
\newtheorem{theorem}{Theorem}[section]

\newtheorem{lemma}[theorem]{Lemma}

\theoremstyle{definition}

\theoremstyle{remark}

\DeclarePairedDelimiter\abs{\lvert}{\rvert}
\DeclarePairedDelimiter\norm{\lVert}{\rVert}
\DeclarePairedDelimiter\inner{\langle}{\rangle}
\DeclarePairedDelimiterX{\KLdelim}[2]{(}{)}{#1\;\delimsize\|\;#2}

\DeclareMathOperator*{\argmin}{arg\,min}

\DeclareMathOperator{\PS}{\mathcal{P}}
\DeclareMathOperator{\Tr}{Tr}
\DeclareMathOperator{\Diff}{D}
\DeclareMathOperator{\KL}{D_{KL}}
\DeclareMathOperator{\E}{\mathbb{E}}
\DeclareMathOperator{\Prob}{\mathbb{P}}

\DeclareMathOperator{\lsp}{logsumexp}
\DeclareMathOperator{\Bary}{\mathcal{E}}

\newcommand{\Wass}{W}

\newcommand{\id}{\mathrm{id}}
\renewcommand{\d}{\mathrm{d}}

\newcommand{\by}{\bm{y}}

\newcommand{\brho}{\bm{\rho}}
\newcommand{\bphi}{\bm{\phi}}

\newcommand{\calE}{\mathcal{E}}
\newcommand{\calL}{\mathcal{L}}
\newcommand{\bbr}{\mathbb{R}}
\newcommand*{\KLdiv}[3][]{\KL\KLdelim[#1]{#2}{#3}}

\definecolor{tab10blue}{RGB}{31, 119, 180} 
\definecolor{tab10orange}{RGB}{255, 127, 14} 

\begin{document}

\maketitle

\begin{abstract}
Computing the unregularized Wasserstein barycenter for measure-valued data is a challenging optimization task. Recent algorithms have been tailored to either discrete measures as point clouds or continuous measures discretized on regular grids. In this work, we propose a primal mirror descent algorithm for computing the exact Wasserstein barycenter in the Fisher-Rao geometry. Our algorithm is a unified approach that is flexible enough to simultaneously cover discrete and absolutely continuous input measures, with convergence guarantees in both settings. In particular, when all input measures are discrete, our algorithm, initialized from any probability density, solves a sequence of semi-discrete optimal transport subproblems and produces absolutely continuous iterates that converge to the discrete barycenter. We use synthetic and real data examples to demonstrate the promising result in terms of accuracy and computational cost.
\end{abstract}

\section{Introduction}
The Wasserstein barycenter extends Euclidean averaging to probability measures by minimizing the weighted sum of squared 2-Wasserstein distances to a collection of input distributions. Since its inception in the optimal transport (OT) literature~\citep{AguehCarlier2011}, it has drawn increasing attention for a wide range of geometric and statistical learning methods on distribution data~\citep{rabin2012wasserstein,Solomon_2015,srivastava2018scalable,HoNguyenYurochkinBuiHuynhPhung2017,ZhuangChenYang2022,zhu2024spherical}. The characterization of Wasserstein barycenters via expectations of optimal transport maps has been investigated by \cite{bigot2018characterization}.

Despite the progress on the applications, practical algorithms for computing the exact Wasserstein barycenter largely bifurcate into two discretization schemes of the input distributions. For \emph{discrete probability measures} as point cloud data, early work showed that the barycenter problem can be formulated as a large-scale linear programming (LP) problem~\citep{CarlierObermanOudet2015_LP,ge2019interior}. While this formulation is exact, its computational cost grows rapidly with the number of support points, making it impractical for high-resolution grids or dense point cloud data. \cite{alvarez2016fixed} proposed an iterative fixed-point algorithm in which the weighted average of Monge transport maps is used to update the barycenter. This update coincides with a unit-step Wasserstein gradient descent for the barycenter functional \citep{pmlr-v125-chewi20a}. Explicit convergence rates are available in the Gaussian setting, where a Polyak–{\L}ojasiewicz (PL) inequality holds \citep{pmlr-v125-chewi20a}. In general, however, the Wasserstein barycenter functional is not geodesically convex in the $W_2$ geometry \citep{ambrosio2008gradient}, and establishing convergence guarantees for gradient-based algorithms with general measure-valued inputs remains challenging.

Departing from the above primal methods for solving the barycenter problem, recent works have established strong duality and utilized gradient flows in the potential function space. \cite{KimYaoZhuChen2025_barycenter-nonconvex-concave,kim2025sobolev} proposed Sobolev gradient based algorithms that optimize dual or semi-dual potentials on \emph{continuous probability densities} discretized on regular grids. These methods produce high-accuracy solutions when sufficiently fine discretization is available, but their reliance on finite-difference approximations makes them unsuitable for coarse or irregular domains and limits their ability to handle discrete measures directly, even in the two-marginal case; see also~\citep{jacobs2020fast}. To overcome the curse of dimensionality, recent works have explored neural-network parameterizations of Kantorovich potentials or transport maps~\citep{ICNNBarycenter2025, pmlr-v235-kolesov24a}. Such approaches scale favorably with dimension but necessarily trade off accuracy for representational flexibility and training stability. In addition, neural network based methods typically require large training sets, careful architecture choices, and do not guarantee exact satisfaction of optimality conditions.

\subsection{Our contribution}
In this work, we propose \textbf{FRBary}, a mirror-descent algorithm in the Fisher--Rao geometry for computing the exact Wasserstein barycenter of a collection of probability measures
$\{\mu_1, \dots, \mu_n\}$. Our method has the following key advantages:
\begin{itemize}
\item The proposed algorithm applies to discrete and absolutely continuous measures in a unified and adaptive manner, allowing the input collection to contain measures of heterogeneous types, including point-cloud and absolutely continuous distributions. While discrete optimal transport methods can, in principle, be applied to continuous measures through discretization, their computational cost grows rapidly as the grid resolution is refined. In contrast, our approach adapts the intrinsic structure of each input measure through semi-discrete and absolutely continuous optimal transport subproblems, leading to substantially improved computational efficiency.

\item Our method is a primal approach for computing the exact Wasserstein barycenter. Assuming only that the measures are supported on a convex and compact set, we prove that the minimum barycenter objective value along the iterates converges to the optimal barycenter functional value at rate $O(\log(T)/\sqrt{T})$ after $T$ iterations, with convergence guarantees holding in the fully continuous, fully discrete, and mixed discrete--continuous regimes.

\item The algorithm is a semi-discrete approach. For discrete input measures (point clouds), it solves a sequence of semi-discrete optimal transport subproblems and produces absolutely continuous barycenter iterates that converge to the discrete Wasserstein barycenter. In many downstream statistical models for distribution-valued data, such as distribution-on-distribution regression \cite{chen2023wasserstein,10.1093/jrsssb/qkad051} and Wasserstein PCA \cite{bigot2017geodesic}, the barycenter serves as an anchor measure and is typically required to be absolutely continuous. This feature of our algorithm therefore facilitates statistical modeling and inference for discrete or sample-based distributions.
\end{itemize}

For limitations, the current implementation primarily focuses on the computation of barycenters for 2D and 3D measures.

\paragraph{Notations.} We write $\PS(\Omega)$ for the set of probability measures supported on a compact and convex set $\Omega \subset \mathbb{R}^d$, and $\PS_{\textnormal{ac}}(\Omega)$ for the subset of measures that are absolutely continuous with respect to the Lebesgue measure. We denote by $\id$ the identity map on $\mathbb{R}^d$, by $\langle x,y\rangle$ the Euclidean inner product of vectors $x,y\in\mathbb{R}^d$, and by $\norm{x}=\sqrt{\inner{ x,x}}$ the associated Euclidean norm. For sequences of real numbers $a_T, b_T$ depending on $T$,  we write $a_T \asymp b_T$ if there exist constants $0 < c_1 \le c_2 < \infty$,
independent of $T$, such that
$c_1b_T \le a_T \le c_2b_T$ for all sufficiently large $T$.

\section{Preliminaries} \label{sec:pre}
We classify optimal transport problems into three settings: continuous, discrete and semi-discrete, due to their fundamental differences in algorithmic structure and theoretical properties. These correspond to the cases where both probability measures are absolutely continuous, both are discrete, or one is discrete and the other is absolutely continuous. In the following, we review optimal transport in each of these three settings and discuss the corresponding Wasserstein barycenter problem.

\paragraph{Continuous optimal transport.} In a nutshell, the optimal transport (OT) problem aims to find the most cost-efficient way to transport a pile of sand to a collection of holes; cf.~\citep{villani2003topics,villani2008_OT_old+new,santambrogio2015optimal} as excellent references on OT theory. 
For any $\mu,\nu \in \PS_{ac}(\Omega)$, the quadratic ($2$-Wasserstein) distance between them is defined as $
W_2^2(\mu,\nu)\coloneqq
\inf_{T: T_{\#}\mu = \nu}
\int_{\Omega}\norm{x - T(x)}^2\odif{\mu(x)}$, where the infimum is taken over all measurable transport maps $T : \Omega \to \Omega$ that push forward $\mu$ to $\nu$ (i.e., $T_{\#}\mu = \nu$). This is the Monge formulation~\citep{monge1781memoire}, where the minimizer (whenever it exists) is called the optimal transport from $\mu$ to $\nu$, denoted $T_{\mu \rightarrow \nu}$. For discrete measures, i.e.,  $\mu, \nu$ lie in $\PS(\Omega)$, but not in $\PS_{ac}(\Omega)$, the Monge problem often has no solution because each atom is indivisible and cannot be split to match the target weights. In contrast, the Kantorovich formulation always has a solution, and defines $
\Wass_2^2(\mu,\nu)
=
\inf_{\pi \in \Pi(\mu,\nu)}
\int_{\Omega \times \Omega}\norm{x - y}^2 \odif{\pi}(x, y)$, where $\Pi(\mu,\nu)$ denotes the set of all couplings on $\Omega \times \Omega$ with marginals $\mu$ and $\nu$. For absolutely continuous measures, the two formulations coincide. The Kantorovich's dual formulation takes the form
\[
\frac{1}{2}\Wass_2^2(\mu,\nu)=\sup_{\phi} 
\int_{\Omega} \phi(x)\odif{\mu(x)}
+
\int_{\Omega} \phi^{c}(y)\odif{\nu(y)},
\]
where $\phi^{c}(y)\coloneqq
\inf_{x\in\Omega}
\{\norm{x-y}^2/2 - \phi(x)
\}$ is the $c$-transform of $\phi$. The maximizer to the dual is called the Kantorovich potential from $\mu$ to $\nu$, denoted as $\phi_{\mu \rightarrow \nu}$. Brenier’s theorem implies that $\id^2/2 - \phi_{\mu \rightarrow \nu}$ is always convex on $\Omega$ and the optimal transport map can be obtained via $T_{\mu \rightarrow \nu} = \id - \nabla \phi_{\mu \rightarrow \nu}$~\citep{Brenier_polarization_1991}. By discretizing $\mu$ and $\nu$ over a regular grid of size $M$, the Sobolev gradient based approach can be applied to compute the Kantorovich potential with per-iteration time complexity $O(M \log (M))$ and space complexity $O(M)$ \citep{kim2025sobolev, jacobs2020fast}.

\paragraph{Discrete optimal transport.}
Let
\(
\widehat{\mu}_1 = \sum_{i=1}^{m_1} u_{1,i}\cdot\delta_{x_{1,i}}
\)
and
\(
\widehat{\mu}_2 = \sum_{j=1}^{m_2} u_{2,j}\cdot\delta_{x_{2,j}}
\)
be two discrete probability measures on \(\mathbb{R}^d\), where for $k=1,2$,
\(u_{k,i}\ge 0\) and \(\sum_i u_{k,i}=1\).
The Kantorovich dual problem reduces to
\[
\frac{\Wass_2^2(\widehat{\mu}_1,\widehat{\mu}_2)}{2} 
=
\max_{\bm{\phi}\in\mathbb{R}^{m_1}}
\left\{
\sum_{i=1}^{m_1} u_{1,i}\,\phi_i
+
\sum_{j=1}^{m_2} u_{2,j}\,\phi^c_j
\right\},
\]
where the maximization is over vector $\bm{\phi} = (\phi_1, \dots, \phi_{m_1})^\top \in \mathbb{R}^{m_1}$, and $\bm{\phi}^c = (\phi^c_1, \dots, \phi^c_{m_2})^\top \in \mathbb{R}^{m_2}$ is the discrete \(c\)-transform of \(\bm{\phi}\), defined by $
\phi^c_j
=
\min_{1\le i\le m_1}
\left\{
\norm{x_{1,i}-x_{2,j}}^2/2-\phi_i
\right\}$, where $j\in[m_2]$.
Equivalently, this formulation enforces the dual feasibility condition
\(
\phi_i+\phi^c_j\le\norm{x_{1,i}-x_{2,j}}^2/2
\)
for all \(i,j\).
The maximizer \(\bm{\phi}\) is referred to as the discrete Kantorovich
potential from \(\widehat{\mu}_1\) to \(\widehat{\mu}_2\), and
\(\bm{\phi}^c\) is the associated dual potential in the reverse direction.

\paragraph{Semi-discrete optimal transport.} Many statistical and data-analytic problems involve computing the Wasserstein distance between a {known, absolutely continuous reference measure} and an {empirical distribution constructed from observed data}. This motivates the {semi-discrete optimal transport} setting, which has received a lot of attention; see \citep{sadhu2024stability, tacskesen2023semi, agarwal2024combinatorial}, among others. Formally, let $\nu \in \PS_{\textnormal{ac}}(\Omega)$ be a known reference measure, and suppose we observe an i.i.d. sample 
$x_1,\dots,x_{m}$ 
from another absolutely continuous measure $\mu$. 
Define the weighted empirical (discrete) approximation
$
\widehat{\mu}
\;=\; \sum_{i=1}^{m} u_i \delta_{x_i}.
$
The $2$-Wasserstein distance between the empirical measure $\widehat{\mu}$ and the continuous measure $\nu$ in this semi-discrete setting admits the Kantorovich dual formulation
\[
\frac{\Wass_2^2(\widehat{\mu},\nu)}{2}
=
\max_{\bm{\phi}\in\mathbb{R}^{m}}
\mleft\{
 \sum_{i=1}^m u_i \phi_i
+
\int_{\Omega} \phi^c(y)\,\odif{\nu(y)}
\mright\},
\]
where the maximization is over the potential vector $\bm{\phi} = (\phi_1,\dots,\phi_{m})^\top \in \mathbb{R}^{m}$, 
which plays the role of the discrete dual potential corresponding to the empirical measure $\widehat{\mu}$, and $\phi^c : \Omega \rightarrow \mathbb{R}$, defined as
$
\phi^c(y)
\coloneqq
\min_{1\le i \le m}
\{
\norm{y-x_i}^2/2 - \phi_i
\},
$ 
is the quadratic $c$-transform of $\bm{\phi} \in \mathbb{R}^m$.  For $i\in[m]$, the partial derivative of the dual objective
\begin{align} \label{eq:semidiscrete}
I_{\widehat{\mu},\nu}(\bm{\phi})\coloneqq\sum_{i=1}^m u_i \phi_i
+
\int_{\Omega} \phi^c(y)\,\odif{\nu(y)}
\end{align}
with respect to 
$\phi_i$ 
is \[
\pdv*{I_{\widehat{\mu},\nu}}{\phi_i}(\bm{\phi})
\coloneqq
u_i-\nu (L_i(\bm{\phi}) ),\] where \[L_i(\bm{\phi})
=\mleft\{y:\frac{1}{2}\norm{y-x_i}_2^2-\phi_i \le\frac{1}{2}\norm{y-x_k}_2^2-\phi_k, \forall k\mright\}\] is the Laguerre cell associated with $x_i$. Thus, by discretizing $\nu$ over a regular grid of size $M$, the concave objective $I_{\widehat{\mu}, \nu}$ can still be solved by a first-order gradient-based approach with per-iteration time complexity of $O(m_i M)$ and a space complexity of $O(m_i + M)$. This is substantially more efficient than the LP-based method, which incurs $O(m_i^3 + M^3)$ time complexity and $O(m_i M)$ space complexity per-iteration. We use $\bm{\phi}_{\widehat{\mu} \rightarrow \nu}$ to represent the maximizing vector of $I_{\widehat{\mu}, \nu}$ and $\phi_{\nu \rightarrow \widehat{\mu} }: \Omega \rightarrow \mathbb{R}$ to denote the corresponding $c$-transform. While both the discrete and semi-discrete settings admit finite-dimensional dual formulations, they differ substantially in structure. In the discrete case, optimal transport reduces to a linear program over a coupling matrix subject to marginal constraints. In contrast, the semi-discrete setting induces a partition of $\Omega$ into Laguerre cells, yielding an explicit transport map that assigns each location to a support point.

\paragraph{Wasserstein barycenter.} The Wasserstein barycenter, introduced by~\citep{AguehCarlier2011}, defines a notion of averaging for a collection of probability measures. Given weights $w_1, w_2, \dots, w_n \ge 0$ with $\sum_{i=1}^n w_i = 1$ and measures $\mu_1, \mu_2, \dots, \mu_n \in \PS(\Omega)$, a \textit{ Wasserstein barycenter} is any minimizer
\begin{equation}
\label{eqn:wass_barycenter}
\lambda^\star\in
\argmin_{\lambda \in \PS(\Omega)}
\mleft\{ \mathcal{E}(\lambda)
\coloneqq
\sum_{i=1}^n \frac{w_i}{2}\Wass_2^2(\mu_i, \lambda) \mright\}.
\end{equation}
Intuitively, $\lambda^\star$ is the distribution that minimizes the weighted sum of squared Wasserstein distances to all input measures. When at least one $\mu_i$ is absolutely continuous with respect to the Lebesgue measure and the supports are not confined to lower-dimensional affine subspaces, the minimizer exists, and is unique and absolutely continuous~\citep{KIM2017640,LeGouicLoubes2017}. If all measures are discrete, it has been shown that Wasserstein barycenters of discrete measures are themselves discrete, and that one can always choose a barycenter that is provably sparse \citep{anderes2016discrete}. In addition, every discrete barycenter admits a non–mass splitting optimal transport to each of the input discrete measures. Such non–mass splitting maps rarely exist between two arbitrary discrete measures unless special mass-balance conditions happen to hold, which highlights a distinctive structural property of Wasserstein barycenters in the discrete case. 

\section{Mirror descent for Wasserstein barycenter}
To motivate our proposed algorithm in the Wasserstein space, we first briefly review the mirror descent algorithm in the Euclidean setting. It is well known that gradient descent for minimizing a differentiable function $f:\mathbb{R}^d \to \mathbb{R}$ can be interpreted as a linearization of the proximal point method of the form~\citep{BECK2003167}:
$$
x^{k+1}
=
\argmin_{x \in \mathbb{R}^d}
\left\{
\inner*{x, \nabla f(x^k)}
+
\frac{1}{2\eta_k}\norm{x-x^k}^2
\right\},
$$
where $\eta_k>0$ is the step size or learning rate. Mirror descent generalizes this update by replacing the Euclidean squared norm with a more general Bregman divergence induced by a strictly convex mirror map $\psi:\mathbb{R}^d\to\mathbb{R}$. Specifically, the mirror descent update takes the form
$$
x^{k+1}
=
\argmin_{x \in \mathbb{R}^d}
\mleft\{
\inner*{x, \nabla f(x^k)}
+
\frac{1}{\eta_k}\Diff_\psi(x,x^k)
\mright\},
$$
where the Bregman divergence associated with $\psi$ is defined by $\Diff_\psi(x,y)
\coloneqq
\psi(x)-\psi(y)-\langle \nabla\psi(y),x-y\rangle$. By choosing an appropriate mirror map, mirror descent adapts the geometry of the optimization procedure to the structure of the objective function. 

The fundamental roadblock for deriving a primal and  convergent gradient-based algorithm for solving the Wasserstein barycenter problem~\eqref{eqn:wass_barycenter} is that the {{Wasserstein barycenter functional is not geodesically convex in $\Wass_2$-geometry.}} Therefore in the existing literature, deriving principled convergence analysis for gradient-based methods to compute the barycenter exists only in highly specialized setups, e.g.~Gaussian measures~\citep{pmlr-v125-chewi20a}, or directly assumes a Wasserstein PŁ inequality that is difficult to verify~\citep{BoufadeneVialard_2025_PL,ZhuChen2025_convergence-nonconvex}. 

Our key starting observation is that the objective functional $\mathcal{E}$ in~\eqref{eqn:wass_barycenter} is {{{convex in the linear structure}}} due to the strict convexity of the functional $\Wass_2^2(\cdot,\nu)$ when $\nu$ has a density; cf.~\citep[Proposition 7.19]{santambrogio2015optimal}. In Wasserstein space, the first variation is an extension of gradient in Euclidean space to a general metric space, when $\lambda$ is absolutely continuous, the first variation of $\mathcal{E}$ at $\lambda$ is a bounded linear map denoted $\delta \mathcal{E}_{\lambda}:\Omega \rightarrow \mathbb{R}$, and has the expression  $\delta \mathcal{E}_{\lambda}\coloneqq \sum_{i=1}^{n} w_i \phi_{\lambda \to \mu_i}$. Using the first variation formula and picking the mirror map as negative relative entropy (reflecting the linear or $L^2$ geometry), we arrive at the following mirror descent algorithm for minimizing $\mathcal{E}$:
\begin{align*}
\lambda^{k+1} 
=\argmin_{\lambda}\int \fdif{\mathcal{E}_{\lambda^{k}}}\odif{\lambda}+\frac{\KLdiv{\lambda}{\lambda^{k}}}{\eta_k}=\argmin_{\lambda}\sum_{i=1}^{n} w_i\int \phi_{\lambda^{k}\to\mu_i}\odif{\lambda} +\frac{\KLdiv{\lambda}{\lambda^{k}}}{\eta_k},
\end{align*}
where $\eta_k$ is the user-selected learning rate and $\Diff_{\textnormal{KL}}$ is the KL-divergence. Let \(\rho^{k}\) denote the density of \(\lambda^{k}\), the above problem can then be written as 
\begin{equation}\label{eq:inner}
\argmin_{\rho}\sum_{i=1}^{n} w_i\E_{\rho}[\phi_{\rho^{k}\to\mu_i}]+\frac{\KLdiv{\rho}{\rho^{k}}}{\eta_k},
\end{equation}
with the corresponding first-order condition $\sum_{i=1}^{n} w_i \phi_{\rho^{k}\to\mu_i}+\eta_k^{-1}\ln  (\rho^{k+1} / \rho^{k} )+\text{const.}=0$. This implies that $\rho^{k+1} \propto \rho^{k}\exp\left(-\eta_k\sum_{i=1}^{n} w_i\phi_{\rho^{k}\to\mu_i}\right)$. 

Now, we summarize our mirror descent algorithm for exact Wasserstein barycenter computation in Algorithm~\ref{alg:main}. Our mirror descent algorithm, FRBary, can be interpreted as a discrete Fisher-Rao gradient flow \cite{bauer2016uniqueness, yan2024learning} of the Wasserstein barycenter functional, in the sense that we evolve the barycenter measure along the Fisher-Rao Riemannian geometry, i.e., steepest descent under the Fisher information metric, while the objective itself is defined via Wasserstein distances to the input measures.

\begin{algorithm}
\caption{Mirror descent for minimizing $\calE(\lambda)$ \label{alg:main}}

\begin{algorithmic}[1]
\Require Initialize measure $\lambda^{0}\in \PS_{\textnormal{ac}}(\Omega)$ with density $\rho^{0}$  such that $ 0 <a \leq \rho^0 (x) \leq b < \infty $ for any $x \in \Omega$, step sizes $\{\eta_k\}_{k\ge1}$, and
maximum number of iterations $T$.

\For{$k = 0,1, \dotsc, T$}
    \State Compute Kantorovich potentials 
    $\phi_{\rho^{k}\to\mu_i}$ for $i\in[n]$.
    \State Update the density by
    \begin{align}\label{eq:update}
        \rho^{k+1}& \propto \rho^{k}\exp\left(-\eta_k\sum_{i=1}^{n} w_i\phi_{\rho^{k}\to\mu_i}\right).
    \end{align}
\EndFor
\end{algorithmic}
\end{algorithm}

Algorithm~\ref{alg:main} remains applicable when some input measures $\mu_i$ are replaced by their empirical discrete counterpart $\widehat{\mu}_i = \sum_{j=1}^{m_i} u_{i,j}\,\delta_{x_{i,j}}$. In this case, the update involves the Kantorovich potential $\phi_{\rho^{k}\rightarrow \widehat{\mu}_i}$ arising from a semi-discrete optimal transport problem. As discussed earlier, let $\bm\phi_{\widehat{\mu}_i \rightarrow \rho^{k}} \in \mathbb{R}^{m_i}$ denote the discrete optimal potential vector from $\widehat{\mu}_i$ to $\rho^{k}$. Its $c$-transform, $\phi_{\rho^{k}\rightarrow \widehat{\mu}_i}$, can be interpreted as the Kantorovich potential from the absolutely continuous measure $\rho^{k}$ to the discrete measure $\widehat{\mu}_i$, in the sense that the map $(\id-\nabla \phi_{\rho^{k}\rightarrow \widehat{\mu}_i})$ pushes forward $\rho^{k}$ to $\widehat{\mu}_i$. More precisely, the proposed method can be interpreted as a semi-discrete barycenter computation: the input measures are discrete, while the mirror-descent iterates remain absolutely continuous and belong to $\PS_{ac}(\Omega)$. Since $\PS_{ac}(\Omega)$ is dense in $\PS(\Omega)$ under the $\Wass_2$ metric, the discrete Wasserstein barycenter lies in the closure of $\PS_{ac}(\Omega)$ and can be approximated arbitrarily well by absolutely continuous measures. Consequently, the sequence $\{\rho^{k}\}$ may converge in $\Wass_2$ to a (possibly discrete) Wasserstein barycenter in the ambient Wasserstein space.

\paragraph{Comparison with entropic Wasserstein barycenter.}
Our method is different from existing entropic OT approaches for Wasserstein barycenters. Entropic methods replace each Wasserstein term $W_2^2(\mu_i,\lambda)$ by an entropically regularized distance
\[
\Wass_{2,\varepsilon}^2(\mu_i,\lambda)\coloneqq
\min_{\pi \in \Pi(\mu_i,\lambda)}
\Bigl\{\int \norm{x-y}_2^2\odif{\pi(x, y)}+\varepsilon\KLdiv{\pi}{\mu_i\otimes\lambda}\Bigr\},
\]
and compute a barycenter of the modified objective 
$
\mathcal{E}_\varepsilon(\lambda)
= \sum_{i=1}^n w_i \Wass_{2,\varepsilon}^2(\mu_i,\lambda)/2,
$
typically on a fixed grid using Sinkhorn iterations \citep{pmlr-v119-janati20a, Solomon_2015}. The resulting ``entropic barycenter'' is therefore biased, depends on the choice of $\varepsilon$, and is restricted to the prescribed grid. In contrast, we keep the \textit{original} barycenter functional 
$\mathcal E(\lambda)=\sum_{i=1}^n w_i  W_2^2(\mu_i,\lambda) /2$ unchanged and use negative entropy solely as the
mirror map in a mirror–descent scheme on $\PS_{ac}(\Omega)$.  
At each iteration, we solve (semi-discrete) OT problems between a continuous candidate $\lambda$ and the inputs $\mu_i$, and update $\lambda$ by a KL–Bregman step
\[
\lambda^{k+1}
=\argmin_{\lambda}
\inner{\delta \mathcal{E}_{\lambda^{k}},\lambda}
+\frac{1}{\eta_k}\KLdiv{\lambda}{\lambda^{k}}.
\]
The KL divergence here plays the role of a geometry (the Bregman divergence of the mirror map), 
not a regularizer of the OT cost. Consequently, our algorithm converges to the exact Wasserstein barycenter, without entropic bias, while being capable of producing an absolutely continuous barycenter directly from discrete inputs.

\paragraph{Implementation.}
Our algorithm can handle inputs of mixed types and at each iteration, the main task is the computation of the Kantorovich potentials $\{\phi_{\rho^k \rightarrow \mu_i}\}$ between the current iterate $\rho^k$ and the input measures $\{\mu_i\}$. The computational complexity for each $\phi_{\rho^k \rightarrow \mu_i}$ is determined by the type of the input measure $\mu_i$, and our algorithm adapts accordingly by invoking the appropriate optimal transport solver. When the input $\mu_i$ is discrete (point cloud), the resulting subproblem is a semi-discrete optimal transport problem, for which Laguerre cell–based methods are more efficient. When the input $\mu_i$ is absolutely continuous and supported on a high-resolution grid, Sobolev gradient–based approaches \citep{kim2025sobolev, jacobs2021back} provide an effective solver. In practice, the above methods often involve discretizing the iterate $\rho^k$ over a high-resolution regular grid; see Section \ref{sec:pre} for discussions of their computational complexities. Alternatively, the iterate $\rho^k$ may be parameterized by a neural network, leading to a grid-free implementation. However, for moderate-dimensional problems (e.g., 2D or 3D), discretization-based approaches often provide higher accuracy and more stable computation. A special case arises when all measures are histogram-type inputs, viewed as piecewise-constant densities on a common low-resolution grid. In this setting, it is natural to represent the iterate using the same grid, yielding a finite-dimensional discretization of the barycenter problem. When the grid size is moderate, the resulting discrete optimal transport subproblems can be solved using LP-based solvers. Additional implementation details are deferred to Appendix~\ref{appx:num_detail}.

\section{Convergence analysis}\label{sec:converge}
The Kantorovich potential $\phi_{\lambda^{k}\to\mu_i}$ is unique only up to an additive constant, and the update rule~\eqref{eq:update} is invariant under such shifts. Since $\Omega$ is compact, define
$R\coloneqq\sup_{x\in\Omega}\norm{x}$. Because the cost function is finite on the compact set $\Omega\times\Omega$, an optimal Kantorovich potential $\phi_{\lambda^{k}\to\mu_i}$ can be chosen to be finite on $\Omega$. We fix the normalization
$
\inf_{x\in\Omega}\phi_{\lambda^{k}\to\mu_i}(x)=0.
$
Then, by Lemma~\ref{lem:phi_bounded_inf0} in the Appendix, the potential is uniformly bounded, and its $L^\infty$-norm satisfies
$
\norm{\phi_{\lambda^{k}\to\mu_i}}_{L^\infty(\Omega)} \le 2R^2.
$
As a consequence, the first variation of $\mathcal{E}$ at $\lambda^{k}$ is also uniformly bounded in $L^\infty(\Omega)$ by $2R^2$. Combining this bound with Lemma~\ref{lem:KL_boundedness} in the Appendix yields
$
\KLdiv{\rho^{k}}{\rho^{k+1}} \le 2 \eta_k^{\,2} R^{4}$. If at least one of the measures $\{\mu_i\}$ is absolutely continuous, then the barycenter $\lambda^\star$ is absolutely continuous. Since $\lambda^{0}$ is initialized with density $\rho^{0}$  such that $ 0 <a \leq \rho^0 (x) \leq b < \infty $ in Algorithm 1, so we have $D_{KL} (\lambda^\star | \lambda^{0})<\infty$. Combining the above inequalities with standard mirror-descent arguments yields the theorem stated below. A complete proof is deferred to the Appendix.

\begin{theorem} \label{thm:main}
Assuming that at least one of $\{\mu_1, \dots, \mu_n \}$ is absolutely continuous. After running the mirror descent algorithm for $T$ iterations, we have
\begin{align*}
\min_{ k\leq T}
\bigl(\Bary(\lambda^{k})-\Bary(\lambda^\star)\bigr)
\le 
\frac{\KLdiv{\lambda^\star}{\lambda^{0}}}{\sum_{k=0}^T \eta_k}
+ 2 R^4
\frac{\sum_{k=0}^T \eta_k^2}{\sum_{k=0}^T \eta_k} .    
\end{align*}
\end{theorem}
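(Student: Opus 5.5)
The plan follows the standard mirror-descent analysis with the negative entropy as mirror map, combining linear convexity of $\Bary$ with the three-point identity for the KL Bregman divergence. Write $g_k \coloneqq \delta\Bary_{\lambda^k} = \sum_{i=1}^n w_i \phi_{\lambda^k\to\mu_i}$, normalized as in the text so that $0\le g_k\le 2R^2$.

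\textbf{Step 1: linear convexity.} We first show $\Bary(\lambda^k)-\Bary(\lambda^\star)\le \int g_k \,\d(\lambda^k-\lambda^\star)$. For each $i$, Kantorovich duality gives $\tfrac12\Wass_2^2(\mu_i,\lambda^\star)\ge \int \phi_{\lambda^k\to\mu_i}\,\d\lambda^\star + \int \phi^c_{\lambda^k\to\mu_i}\,\d\mu_i$, since $(\phi,\phi^c)$ is feasible for the pair $(\lambda^\star,\mu_i)$. Equality holds for $\lambda^k$ because the pair is optimal there. Subtracting and summing with weights $w_i$ yields the claim. This holds equally when $\mu_i$ is discrete, using the semi-discrete dual pair. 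No differentiability is needed; only the supergradient inequality for a supremum of affine functionals is used.

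\textbf{Step 2: one-step mirror inequality.} From the update \eqref{eq:update}, $\ln(\rho^{k+1}/\rho^k) = -\eta_k g_k - \ln Z_k$, where $Z_k=\int \rho^k e^{-\eta_k g_k}$. For any $\lambda$ with $\KLdiv{\lambda}{\lambda^k}<\infty$, a direct computation gives
\[
\KLdiv{\lambda}{\lambda^{k+1}}-\KLdiv{\lambda}{\lambda^{k}} = \eta_k\int g_k\,\d\lambda + \ln Z_k .
\]
The same identity with $\lambda=\lambda^k$ gives $\ln Z_k = -\eta_k\int g_k\,\d\lambda^k + \KLdiv{\lambda^k}{\lambda^{k+1}}$. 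Combining these,
\[
\eta_k\int g_k\,\d(\lambda^k-\lambda^\star) = \KLdiv{\lambda^\star}{\lambda^{k}}-\KLdiv{\lambda^\star}{\lambda^{k+1}}+\KLdiv{\lambda^k}{\lambda^{k+1}} .
\]
We then invoke Lemma~\ref{lem:KL_boundedness} together with the bound $\norm{g_k}_\infty\le 2R^2$, giving $\KLdiv{\lambda^k}{\lambda^{k+1}}\le 2\eta_k^2R^4$. As a sanity check, Hoeffding's lemma bounds $\ln Z_k+\eta_k\int g_k\,\d\lambda^k$ by $\eta_k^2(2R^2)^2/8$, which is even smaller.

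\textbf{Step 3: telescoping.} Multiply Step 1 by $\eta_k$, substitute Step 2, and sum over $k=0,\dots,T$. The KL terms telescope to at most $\KLdiv{\lambda^\star}{\lambda^0}$, because the final term $-\KLdiv{\lambda^\star}{\lambda^{T+1}}\le 0$. This gives
\[
\sum_k \eta_k\bigl(\Bary(\lambda^k)-\Bary(\lambda^\star)\bigr)\le \KLdiv{\lambda^\star}{\lambda^0}+2R^4\sum_k\eta_k^2 .
\]
Bounding the left side below by $\bigl(\min_{k\le T}(\Bary(\lambda^k)-\Bary(\lambda^\star))\bigr)\sum_k\eta_k$ and dividing concludes.

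\textbf{Main obstacle.} The delicate point is justifying that all quantities are finite and the identities are legitimate. This means $\KLdiv{\lambda^\star}{\lambda^k}<\infty$ for every $k$, which follows inductively: $\rho^k/\rho^0$ is bounded above and below by $e^{\pm 2\eta R^2}$-type factors since $g_k$ is bounded, and $\lambda^\star$ is absolutely continuous with $\KLdiv{\lambda^\star}{\lambda^0}<\infty$ because $\rho^0\ge a>0$. The integrals $\int g_k\,\d\lambda^\star$ are finite by boundedness. Measurability of the potentials and the validity of the supergradient inequality in the semi-discrete case are handled by the fact that the $c$-transform pair remains dual-feasible on all of $\Omega\times\Omega$.
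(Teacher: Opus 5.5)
Your proof is correct and follows the paper's argument essentially step for step: the same KL three-point identity derived from the exponential update, the same bound $\KLdiv{\lambda^k}{\lambda^{k+1}}\le 2\eta_k^2R^4$ from Lemma~\ref{lem:KL_boundedness}, the linear-convexity subgradient inequality, and telescoping. Three parts of your write-up refine the paper's argument without departing from it: the explicit dual-feasibility derivation of the subgradient inequality (which the paper only asserts), the inductive check that $\KLdiv{\lambda^\star}{\lambda^k}<\infty$, and the Hoeffding remark (which would sharpen the constant to $R^4/2$).
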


The above rate matches the standard mirror-descent rate for convex  nonsmooth objectives in Euclidean space. If we choose a constant step size $\eta_k = T^{-1/2}$, then
$\sum_{k=0}^T \eta_k \asymp \sqrt{T}$ and $\sum_{k=0}^T \eta_k^2 \asymp 1$,
so the right-hand side is of order $O(T^{-1/2})$.
If instead we use the diminishing step size $\eta_k = (k+1)^{-1/2}$, then
$\sum_{k=1}^T \eta_k \asymp \sqrt{T}$ and
$\sum_{k=1}^T \eta_k^2 \asymp \log T$,
yielding the rate $O(\log T/\sqrt{T})$.

We note that if each $\mu_i$ is discrete, then the barycenter $\lambda^\star$ is also discrete, and hence $\KLdiv{\lambda^\star}{\lambda^{0}}=+\infty$. In this case the preceding guarantee is vacuous, since the right-hand side diverges. To establish convergence in the discrete setting, we instead compare $\lambda^\star$ to a smoothed approximation $\lambda_\varepsilon$ with density $\rho_\varepsilon$ such that
\begin{align}\label{eq:w2}
\Wass_2(\lambda_\varepsilon,\lambda^\star)& = O(\varepsilon), \\ \label{eq:kl}
\KLdiv{\rho_\varepsilon}{\rho^0}& =O\mleft(\log(1/\varepsilon)\mright),
\end{align}
where $\rho^0$ denotes the initial density in the mirror-descent algorithm.
In Lemmas~\ref{lem:KL_mollified} and \ref{lem:W2_mollified} (Appendix), we show that such a $\lambda_\varepsilon$ can be constructed via a truncated Gaussian smoothing procedure.
Then, for any fixed $T$, choosing $\varepsilon$ appropriately allows us to control both terms in the decomposition
$ 
\calE(\lambda^{k})-\calE(\lambda^\star)
= (\calE(\lambda^{k})-\calE(\lambda_\varepsilon))
+
(\calE(\lambda_\varepsilon)-\calE(\lambda^\star)).$ This leads to the following theorem, where we let $\partial \Omega$ be the boundary of $\Omega$ and  $\operatorname{dist}(x,\partial\Omega) = \inf_{y \in \partial \Omega}\norm{x - y}_2$. The complete proof is deferred to the Appendix.

\begin{theorem}\label{thm:discrete}
For a set $\{ \mu_i = \sum_{j=1}^{m_i}u_{i, j} \delta_{x_{i,j}} \}_{i=1}^n$ of discrete measures. Assuming that there exists $r>0$ such that for any $i, j$, $\operatorname{dist}(x_{i, j},\partial\Omega)\ge r$.
Running the mirror descent algorithm for $T \geq 4d /r^2$ iterations with step size $\eta_k = 1/\sqrt{k+1}$ or $\eta_k = 1/\sqrt{T}$, we have
\begin{align*}
    \min_{0\le k\leq T}
\bigl(\Bary(\lambda^{k})-\Bary(\lambda^\star)\bigr) = O\mleft(\frac{\log T}{\sqrt{T}}\mright).
\end{align*}
\end{theorem}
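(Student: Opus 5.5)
The plan is to rerun the mirror-descent argument behind Theorem~\ref{thm:main} with the comparison measure $\lambda^\star$ replaced by the smoothed surrogate $\lambda_\varepsilon$ from Lemmas~\ref{lem:KL_mollified} and \ref{lem:W2_mollified}, and then choose $\varepsilon$ as a function of $T$. That mirror-descent argument only uses three facts about the comparison measure: $\calE$ is convex along linear interpolations; the first variation $\delta\calE_{\lambda^k}=\sum_i w_i\phi_{\lambda^k\to\mu_i}$ is bounded by $2R^2$ in $L^\infty$; and the three-point identity for the KL Bregman divergence holds. None of these requires the comparison measure to be a minimizer. Hence for any absolutely continuous $\lambda_\varepsilon$ with $\KLdiv{\rho_\varepsilon}{\rho^0}<\infty$ I expect
\[
\min_{k\le T}\bigl(\calE(\lambda^k)-\calE(\lambda_\varepsilon)\bigr)\le \frac{\KLdiv{\rho_\varepsilon}{\rho^0}}{\sum_{k=0}^T\eta_k}+2R^4\frac{\sum_{k=0}^T\eta_k^2}{\sum_{k=0}^T\eta_k}.
\]
The iterates $\rho^k$ stay absolutely continuous, since each update multiplies by a bounded positive factor. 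So the potentials $\phi_{\rho^k\to\mu_i}$ are well defined through the semi-discrete duality, and the supporting-hyperplane inequality $\calE(\lambda_\varepsilon)\ge\calE(\lambda^k)+\int\delta\calE_{\lambda^k}\,\d(\lambda_\varepsilon-\lambda^k)$ follows from Kantorovich duality. Indeed, $\phi_{\rho^k\to\mu_i}$ together with its $c$-transform is dual-feasible for the pair $(\lambda_\varepsilon,\mu_i)$ and optimal for the pair $(\lambda^k,\mu_i)$. This is the key step that lets me avoid the strict-convexity statement requiring densities on the $\mu_i$.

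Next I bound the smoothing error $\calE(\lambda_\varepsilon)-\calE(\lambda^\star)$. By the triangle inequality, $|W_2(\mu_i,\lambda_\varepsilon)^2-W_2(\mu_i,\lambda^\star)^2|\le W_2(\lambda_\varepsilon,\lambda^\star)\bigl(W_2(\mu_i,\lambda_\varepsilon)+W_2(\mu_i,\lambda^\star)\bigr)\le 4R\,W_2(\lambda_\varepsilon,\lambda^\star)$, because all the measures live in $\Omega$, whose diameter is at most $2R$. Using \eqref{eq:w2}, this gives $\calE(\lambda_\varepsilon)-\calE(\lambda^\star)=O(R\varepsilon)$.

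The barycenter $\lambda^\star$ of discrete inputs is supported in the set of weighted averages $\sum_i w_i x_{i,j_i}$. That set is a convex combination of points at distance at least $r$ from $\partial\Omega$, so by convexity of $\Omega$ these averages are also at distance at least $r$ from the boundary. This is exactly what the truncated Gaussian construction of Lemmas~\ref{lem:KL_mollified}--\ref{lem:W2_mollified} needs: with bandwidth $\varepsilon$, truncation at radius $r$ loses only a small amount of mass once $\varepsilon\lesssim r/\sqrt d$. I will take $\varepsilon=1/\sqrt{T}$. The hypothesis $T\ge 4d/r^2$ is precisely $\varepsilon\le r/(2\sqrt d)$, so the lemmas apply and give $\KLdiv{\rho_\varepsilon}{\rho^0}=O(\log T)$. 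The hidden constants depend on $d$, $a$, $b$, $R$, $r$ and the number of atoms of $\lambda^\star$, which is finite; by \citep{anderes2016discrete} it is at most $\sum_i m_i$.

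Combining the two bounds with $\sum_k\eta_k\asymp\sqrt T$ and $\sum_k\eta_k^2=O(\log T)$ (or $O(1)$ for the constant step) yields $O(\log T/\sqrt T)+O(\log T/\sqrt T)+O(1/\sqrt T)=O(\log T/\sqrt T)$. The main obstacle I anticipate is verifying that the KL bound \eqref{eq:kl} holds uniformly, with constants that do not blow up. This requires controlling $\int\rho_\varepsilon\log\rho_\varepsilon\approx -\tfrac d2\log(2\pi e\varepsilon^2)+\text{(entropy of atom weights)}$ and the term $-\int\rho_\varepsilon\log\rho^0\le\log(1/a)$. It also requires handling the normalization of the truncated Gaussians, which is where the condition $\varepsilon\le r/(2\sqrt d)$ enters: it ensures each truncated Gaussian retains at least a constant fraction of its mass.
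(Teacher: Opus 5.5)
Your proposal is correct and follows essentially the same route as the paper. You plug the truncated-Gaussian smoothing $\rho_\varepsilon$ of $\lambda^\star$ into the mirror-descent inequality behind Theorem~\ref{thm:main}, bound $\calE(\lambda_\varepsilon)-\calE(\lambda^\star)\le 4R\,\Wass_2(\lambda_\varepsilon,\lambda^\star)$, and take $\varepsilon=T^{-1/2}$ so that $T\ge 4d/r^2$ puts you in the range of Lemmas~\ref{lem:KL_mollified} and~\ref{lem:W2_mollified}. Two of your additions tighten steps the written proof leaves implicit: the paper applies those lemmas to the atoms of $\lambda^\star$ without checking $\operatorname{dist}(\cdot,\partial\Omega)\ge r$, which your weighted-average-support plus convexity argument supplies, and your Kantorovich-duality derivation of the subgradient inequality holds even when every $\mu_i$ is discrete, unlike the paper's appeal to convexity along linear mixtures.
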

The condition $\operatorname{dist}(x_{i, j},\partial\Omega)\ge r$ requires that all support points ${x_{i,j}}$ be strictly contained in $\Omega$, and is used in the derivation of the bounds \eqref{eq:w2} and \eqref{eq:kl}. In practice, $\Omega$ can always be chosen as a sufficiently large bounding box so that the condition holds. Unlike Theorem~\ref{thm:main}, the resulting convergence rate contains an additional $\log T$ factor even under the optimal constant step size $\eta_k = 1/\sqrt{T}$, due to the logarithmic term appearing in the KL-type bound \eqref{eq:kl}. The only assumption required for Theorems~\ref{thm:main} and~\ref{thm:discrete}
is the convexity and compactness of~$\Omega$. This condition is substantially weaker than the
assumptions imposed by the dual approach of~\cite{kim2025sobolev}. In that work,
the algorithm iteratively updates $n-1$ Kantorovich potentials
$\{\phi_i\}_{i=1}^{n-1}$ and defines the remaining one as $\phi_{\textnormal{mix}}
\coloneqq -\sum_{i=1}^{n-1}(w_i/w_n)\phi_i.$ To establish convergence, they require that, throughout the optimization
process, the potentials remain continuous and that $\norm*{(\id - \nabla \phi_i^c)_{\#}\mu_i
-
(\id - \nabla \phi_{\textnormal{mix}}^c)_{\#}\mu_n
}_{\dot H^{-1}}^2$ is uniformly bounded, where $\norm{\cdot}_{\dot H^{-1}}$ denotes the dual norm of the homogeneous Sobolev space.

\section{Experiments}\label{sec:exp}
We present a series of experiments to evaluate the performance of the proposed method and compare it with benchmark methods from the literature. Due to space constraint, additional simulation results and details, including hardware specifications, are provided in \Cref{appx:add_detail}.

\subsection{Heterogeneous input}\label{subsec:mix}
In this experiment, we consider heterogeneous inputs consisting of: (i) a $28 \times 28$ digit from the MNIST dataset \citep{726791}, treated as a piecewise constant density; (ii) an uniform density supported on a heart-shaped domain, discretized over a high-resolution grid of size $256 \times 256$; and (iii) a point cloud (discrete measure) of $10^4$ samples drawn from the Swiss roll distribution. To the best of our knowledge, only free-support barycenter methods are applicable in this setting. We therefore compare our approach with the exact and Sinkhorn free-support barycenters implemented in the Python Optimal Transport (POT) library \citep{flamary2024pot}. The convolutional Wasserstein barycenter \citep{Solomon_2015} and the debiased Sinkhorn barycenter \citep{pmlr-v119-janati20a} rely on a common discretization of the underlying domain (i.e., a shared grid or support) and are therefore not directly applicable in our setting with heterogeneous inputs and irregular supports. Similarly, Sobolev gradient-based approaches \citep{kim2025sobolev, KimYaoZhuChen2025_barycenter-nonconvex-concave} are not applicable, as they require all input measures to be supported on high-resolution grids.

The results from our method and the free-support approaches are presented in \Cref{fig:mixed_bary_fr} and \Cref{fig:mixed_bary_fs}, respectively. The free-support methods produce point clouds with a pre-specified sample size, set to 1000 in this simulation study. In this setting, all three true barycenters are absolutely continuous and admit densities. Our method directly estimates each barycenter density, which we discretize on a $100 \times 100$ grid, enabling the generation of an arbitrary number of samples. In contrast, both exact and Sinkhorn free-support approaches produce only a fixed set of samples for each case and must be rerun to generate additional points. While densities can be estimated from these samples, this requires additional smoothing and the introduction of tuning parameters, such as the bandwidth. Moreover, all methods are run for 250 iterations and as shown in \Cref{tab:mixed_res}, our method requires less computational time compare to the exact free-support approach despite producing a larger output, namely density values on $10^4$ grid points compared to $10^3$ sample points. This is expected, as the exact free-support method effectively solve a linear program and scale poorly with the number of input points. The Sinkhorn free-support method uses less computation time for the left barycenter, as it converges before reaching the full 250 iterations. However, its overall performance is unsatisfactory, as it tends to push support points toward the boundary of the output shape.

\begin{figure}[!htbp]
\centering
    \begin{subfigure}[t]{0.475\linewidth}
        \centering
        \includegraphics[width=\linewidth]{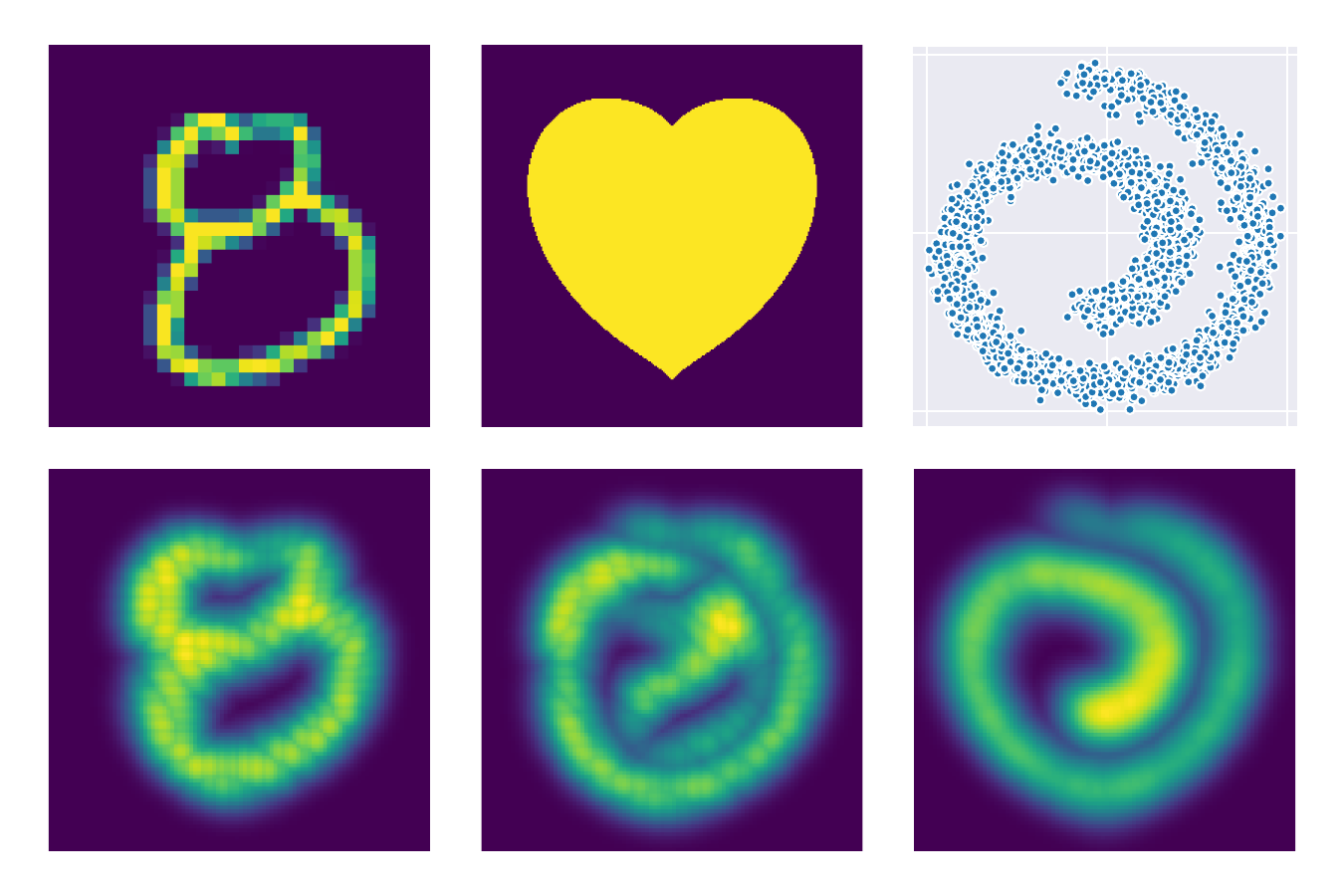}
        \caption{(\textbf{Top row}): input distributions. (\textbf{Bottom row}): barycenters computed by FRBary. (\textbf{left}): from the first two inputs; (\textbf{middle}): from all three inputs; (\textbf{right}): from the last two inputs.} \label{fig:mixed_bary_fr}
    \end{subfigure}%
    \hspace{0.025\linewidth}
    \begin{subfigure}[t]{0.475\linewidth}
        \centering
        \includegraphics[width=\linewidth]{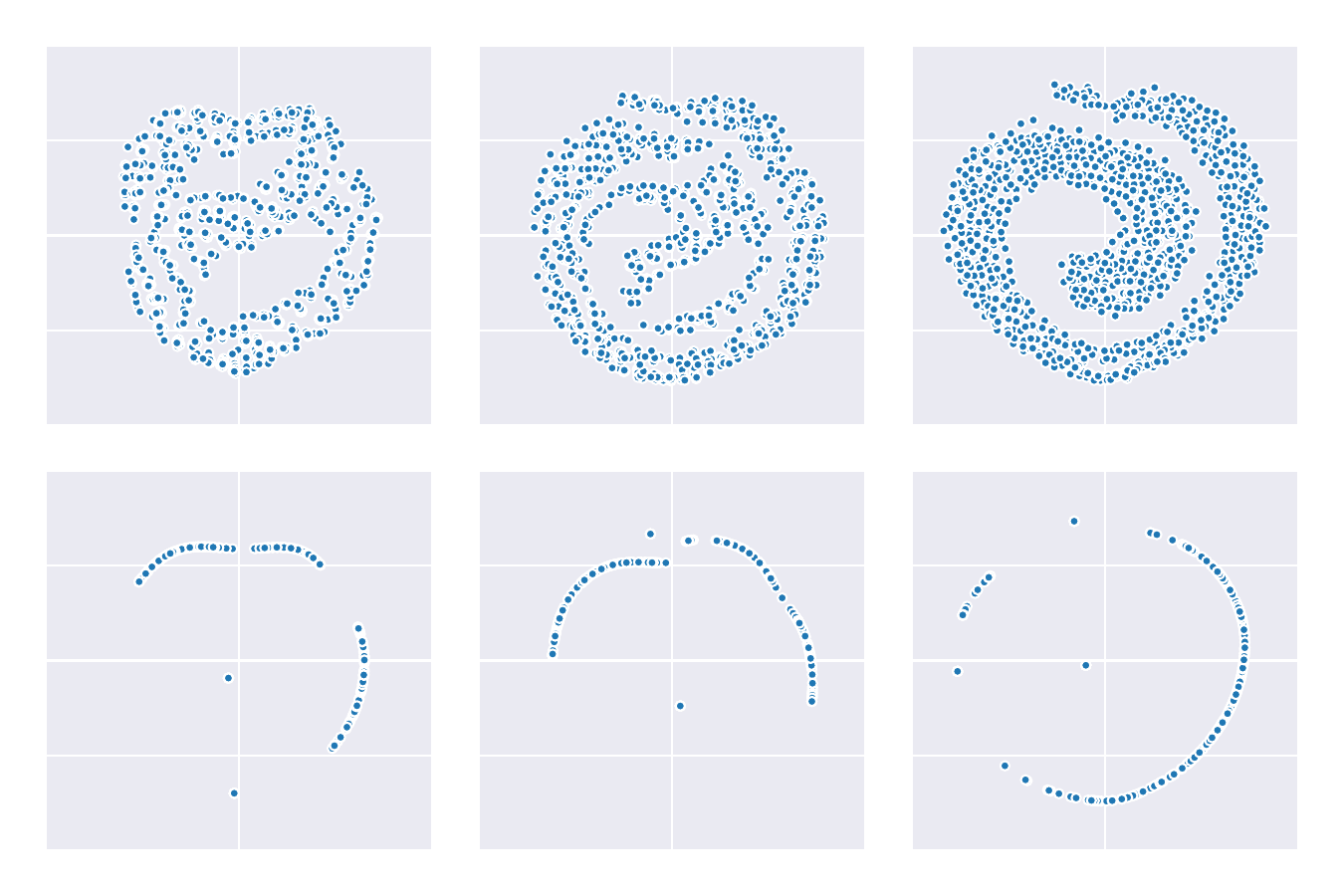}
        \caption{(\textbf{Top row}): barycenters computed by the exact free-support approach. (\textbf{Bottom row}): barycenters computed by the Sinkhorn free-support approach.} \label{fig:mixed_bary_fs}
    \end{subfigure}
    \caption{Barycenters from heterogeneous input distributions.}
    
\end{figure}

\begin{table}[!htbp]
\centering
\begin{tabular}{l|rrr}
\hline\hline
Method & Barycenter (left) & Barycenter (middle) & Barycenter (right) \\\hline
FRBary & 275 & 357 & 286\\
Exact Free-support & 609 & 798 & 802\\
Sinkhorn Free-support & 84 & 834 & 468\\\hline
\end{tabular}
\caption{Comparison of computation time (in seconds).
\label{tab:mixed_res}}
\end{table}

\subsection{MNIST}\label{subsec:mnist}
We use the MNIST dataset \citep{726791} as a simple real-world benchmark. The goal is to compute the barycenter of $n=10$ images of the digit 3, where each image has size $28 \times 28$ pixels and is normalized and treated as a $784$-dimensional histogram. Since the ground truth barycenter can be computed via linear programming (LP), we assess accuracy by measuring the $\Wass_2$ and $L^2$ distances between the estimated barycenter densities and the ground truth. We compare our method with the entropic regularized Wasserstein barycenter (EWB) \citep{doi:10.1137/141000439} and the debiased Sinkhorn barycenter (DSB) \citep{pmlr-v119-janati20a}, both of which are well suited for this setting and are implemented in the POT library.

\begin{table}[!htbp]
\centering
\begin{tabular}{l|rrrr}
\hline\hline
    & $\Wass_2$ & $L^2$ \\\hline
FRBary & $1.360\times 10^{-4}$  & $1.573\times 10^{-4}$ \\
EWB & $1.904\times 10^{-4}$  & $2.875\times 10^{-4}$ \\ 
DSB & $3.237\times 10^{-4}$ & $5.582\times 10^{-4}$ \\
\hline
\end{tabular}
\caption{Quantifying the difference between the estimated barycenter and the ground truth.}
\label{tab:mnist_ent}
\end{table}

\Cref{fig:mnist_comp} displays the ground truth barycenter along with the estimated barycenters. Our result is obtained after $T=500$ iterations. EWB and DSB, with regularization parameter set to $0.0004$, are run until convergence; smaller values (e.g., $0.0003$) lead to runtime errors. While our method is not the most computationally efficient in this setting, it yields the most accurate barycenter. Visually, our estimate is close to the exact solution. Quantitatively, \Cref{tab:mnist_ent} reports the $\Wass_2$ and $L^2$ distances to the ground truth, showing that our method achieves lower error than both EWB and DSB.

\begin{figure}[!htbp]
\centering
\includegraphics[width=0.8\linewidth]{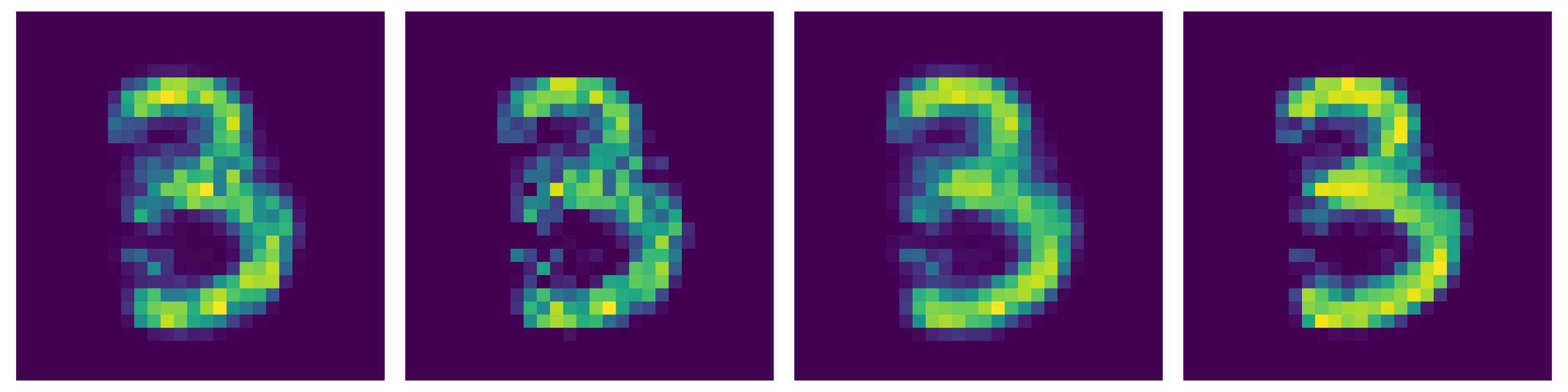}
\caption{\textbf{Left to right}: ground truth (LP), mirror descent barycenter (FRBary), entropic barycenter (EWB), and debiased entropic barycenter (DSB), where $\lambda=0.0004$ for the latter two.}
\label{fig:mnist_comp}
\end{figure}

\subsection{2D/3D Gaussian point clouds}\label{subsec:num_gauss}
We evaluate our method on point clouds generated from Gaussian distributions. Since the Wasserstein barycenter of Gaussian measures remains Gaussian and admits a closed-form solution, this setting allows us to quantitatively assess the accuracy of the estimated barycenters from point clouds. We first consider four two-dimensional Gaussian measures with randomly generated means and covariance matrices. Each input point cloud contains $10{,}000$ samples. Our method outputs a barycenter density discretized over a $100\times 100$ grid. The learning rate is set to $0.1\times k^{-0.3}$, and the algorithm is run for $T=125$ iterations. Samples from the estimated barycenter are then obtained via rejection sampling. We also repeat this experiment for three three-dimensional Gaussian measures. In this case, the barycenter density is discretized over a $50\times50\times50$ grid, and $3{,}000$ samples are drawn from each input distribution. The learning rate is set to $0.2\times k^{-0.2}$, and the algorithm is run for $T=100$ iterations. Notably, the free-support methods in the Python Optimal Transport (POT) library \citep{flamary2024pot} do not support barycenter computation for three-dimensional distributions and are therefore omitted in this example. \Cref{fig:gauss_2d_3d} compares samples generated from the estimated barycenter with those from the true barycenter. We estimate the mean vectors and covariance matrices from these samples and report their distances to the ground truth in \Cref{tab:gauss_tab}. The results indicate that the learned densities closely approximate the true barycenters. More discussions for Gaussian distributions can be found in \Cref{appx:gaussian}.

\begin{figure}[!htbp]
\centering
\centering
\includegraphics[width=0.8\linewidth]{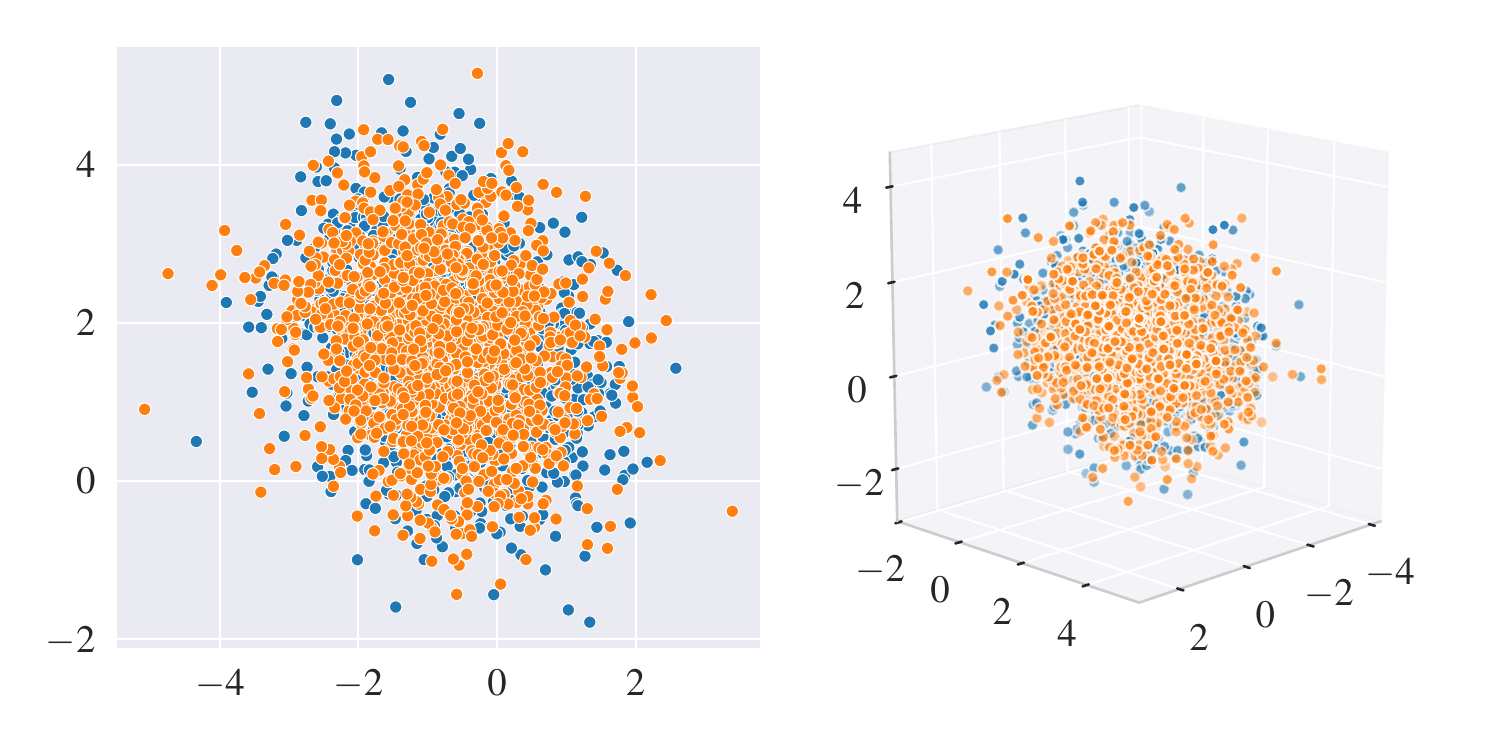}
\caption{2D (left) and 3D (right) samples drawn from the true (\textcolor{tab10blue}{blue}) and estimated (\textcolor{tab10orange}{orange}) barycenter.}
\label{fig:gauss_2d_3d}
\end{figure}

\begin{figure}[!htbp]
\centering
    \centering
    \begin{tabular}{llll}
\hline\hline
$d$ & $n$ & $m$ & distance \\\hline
\multirow{2}{*}{2} & \multirow{2}{*}{4} & \multirow{2}{*}{10000} & Mean: $4.184\times 10^{-3}$ \\
  &   &    & Cov:  $1.746\times 10^{-1}$ \\
\hline
\multirow{2}{*}{3} & \multirow{2}{*}{3} & \multirow{2}{*}{3000}  & Mean: $1.226\times 10^{-2}$ \\
  &   &    & Cov:  $7.007\times 10^{-2}$ \\\hline
\end{tabular}
\caption{Distances between the true and estimated means and covariances.}
\label{tab:gauss_tab}
\end{figure}

\subsection{Color palette averaging}\label{subsec:num_color}
A 3-channel image can be viewed as samples from a color palette distribution on $[0,1]^3$ after channel normalization. The palette barycenter across images can therefore be computed directly from the pixel samples. \Cref{fig:color_avg} shows the color averaging result for $n=2$ images. The image resolutions are $3160\times1846$ and $2560\times1573$. We employ the neural network implementation to compute the barycenter, where the barycenter log-density is represented by a 3-layer MLP with SiLU activations. During training, the OT potential functions are estimated via stochastic semi-discrete OT using the \texttt{OTT} package \citep{cuturi2022optimal}. The algorithm is run for 200 iterations using a learning rate schedule of $3\times k^{-0.1}$. After convergence, samples are drawn from the barycenter using underdamped Langevin dynamics. We then apply a discrete OT map to match the color histogram of the input images. The original images exhibit strong red--blue contrast; the averaged images show more similar color tones, with darker blue/green on the left, and an orange tint on the right.

\begin{figure}[!htbp]
\centering
\begin{subfigure}[t]{0.8\textwidth}
\centering
\includegraphics[height=0.28\textwidth,frame]{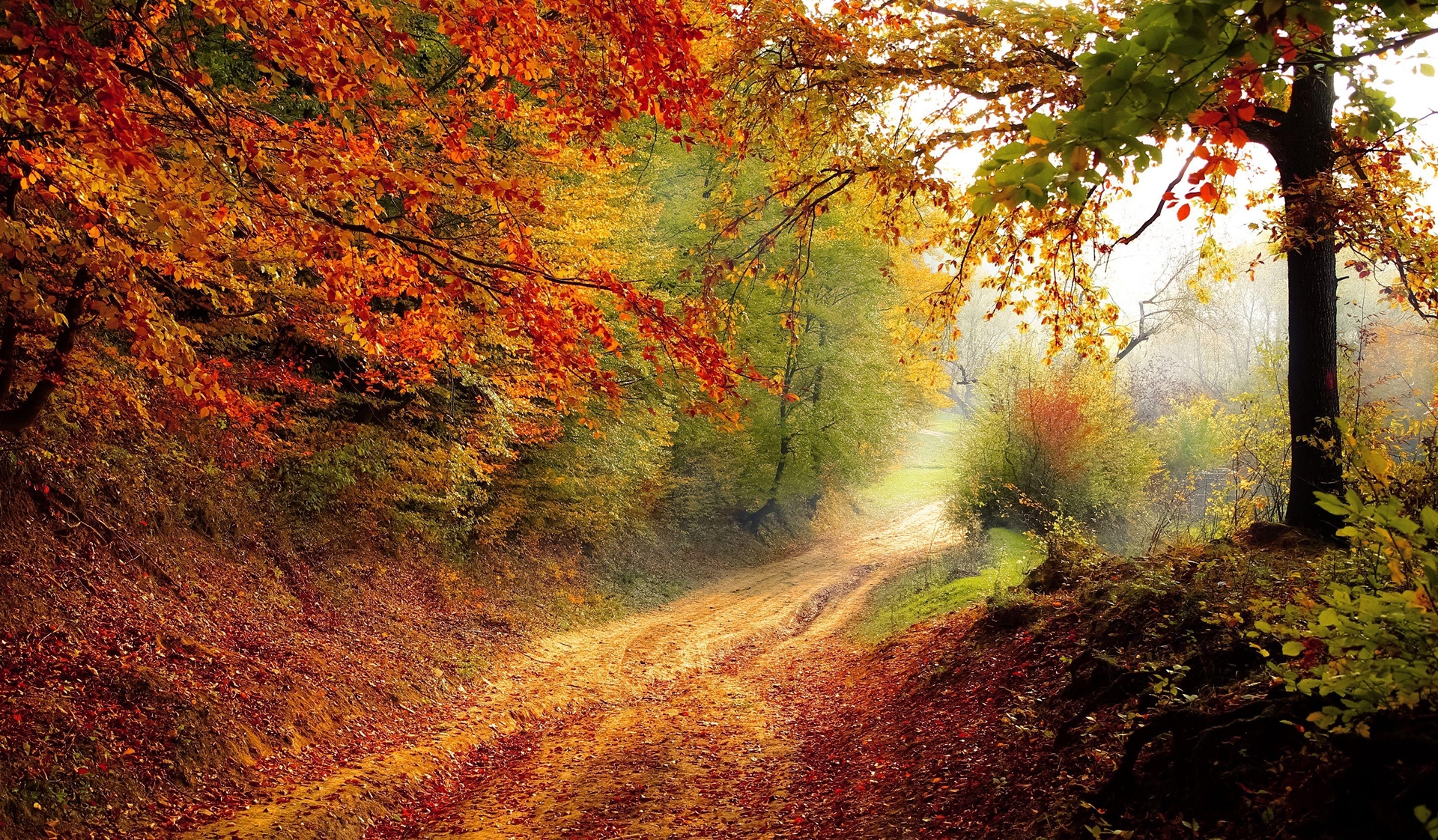}
\quad
\includegraphics[height=0.28\textwidth,frame]{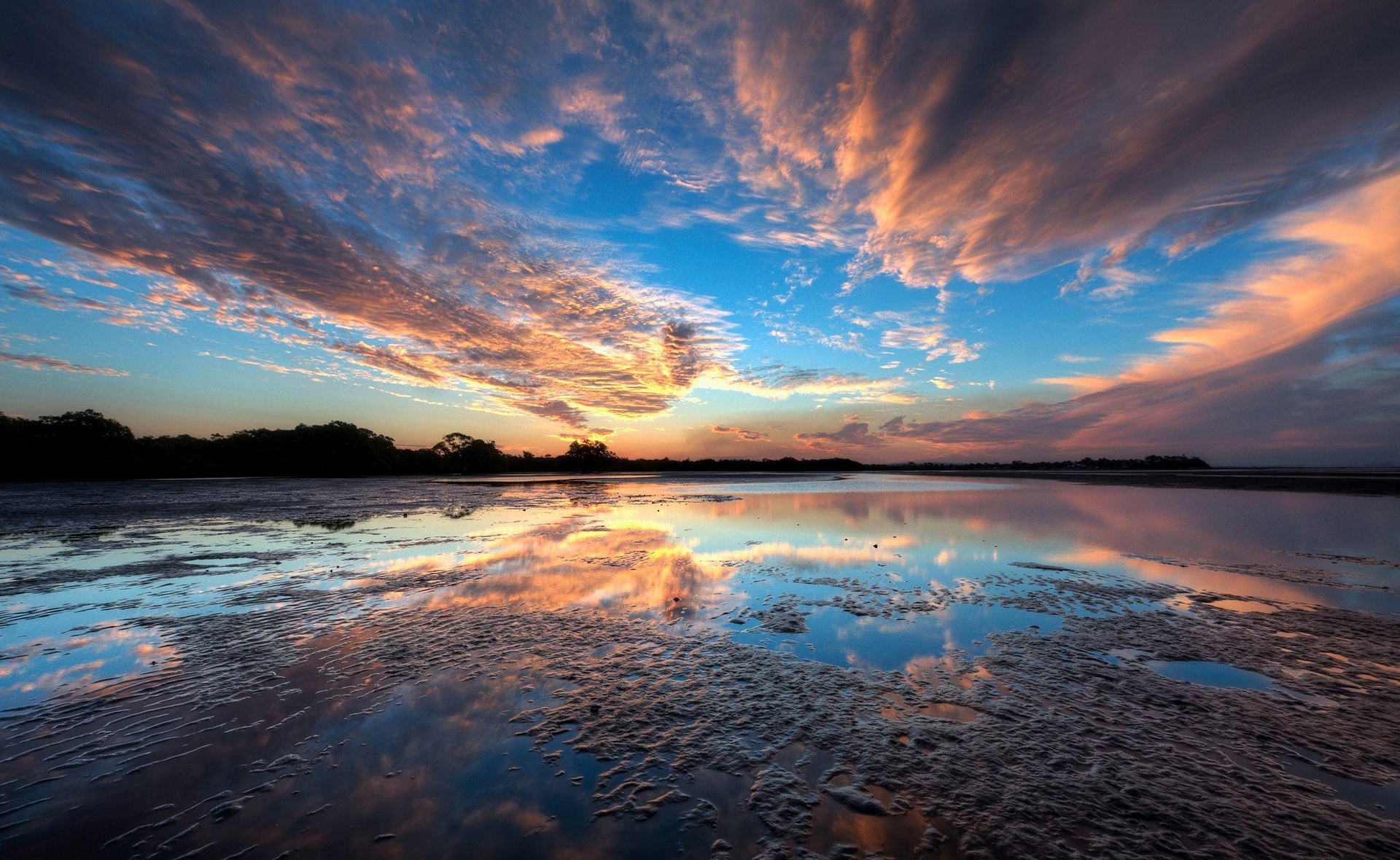}
\caption{Input images.}
\end{subfigure}
~
\begin{subfigure}[t]{0.8\textwidth}
\centering
\includegraphics[height=0.28\textwidth,frame]{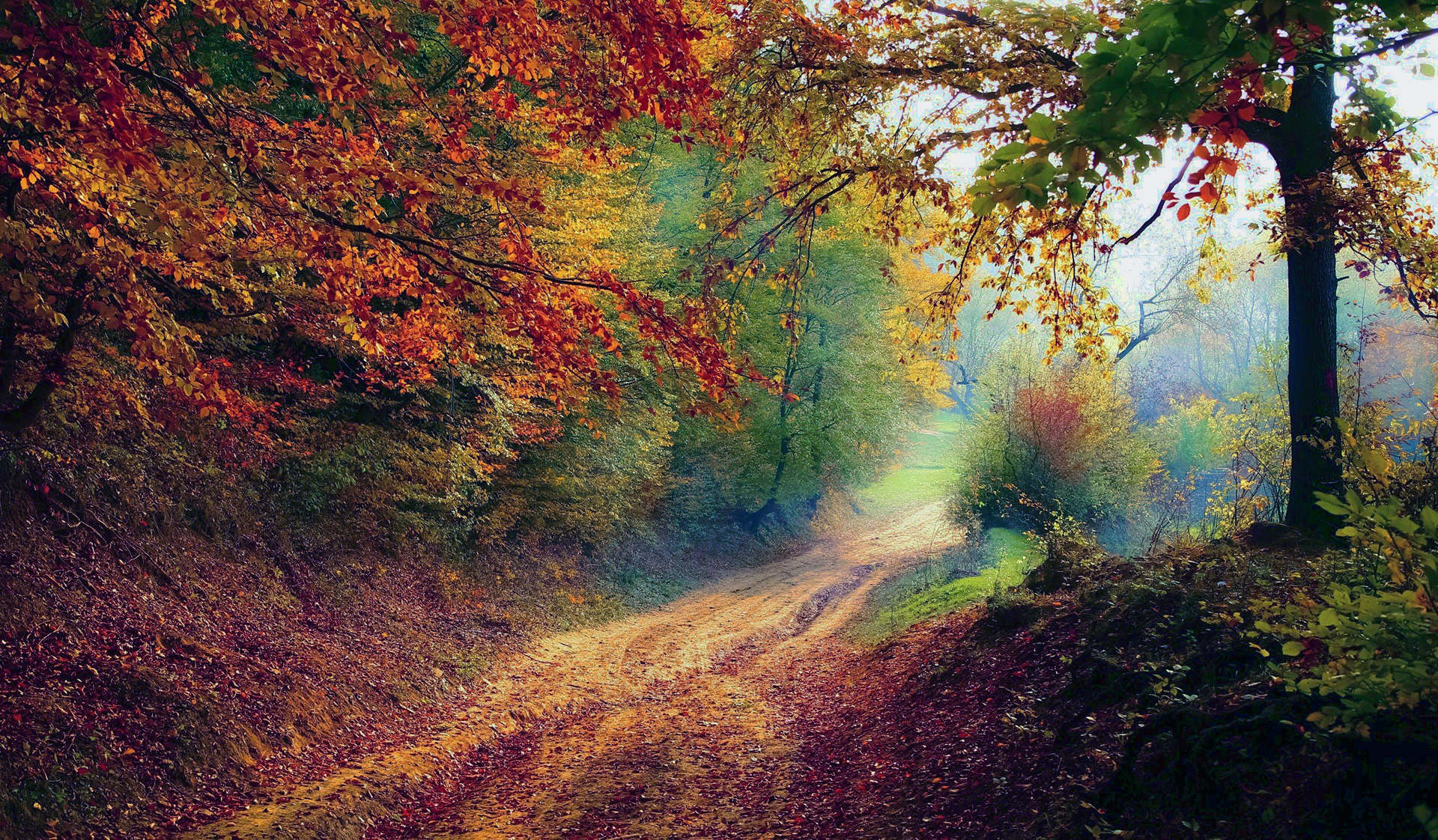}
\quad
\includegraphics[height=0.28\textwidth,frame]{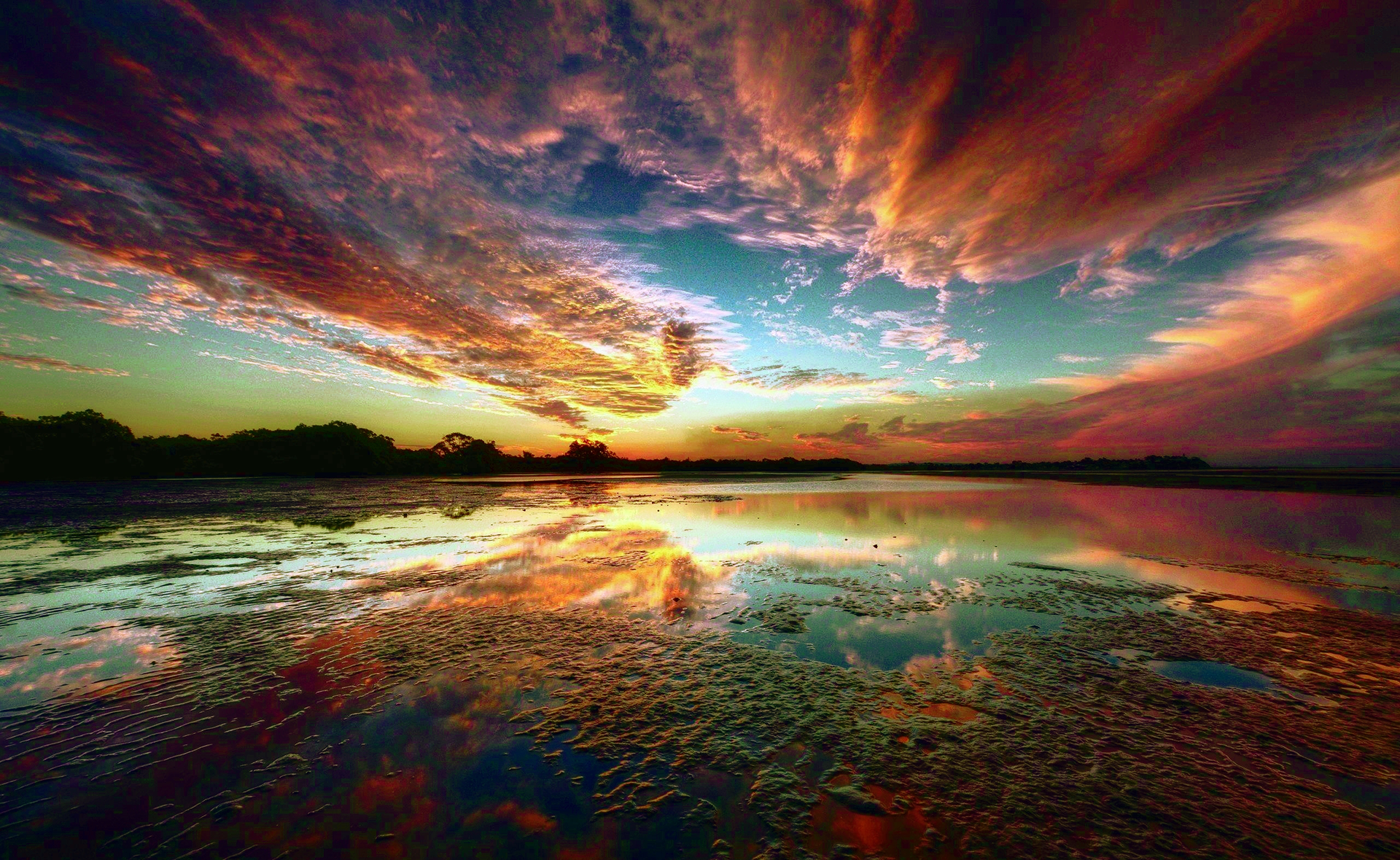}
\caption{Images with averaged color palette.}
\end{subfigure}
\caption{Results of color averaging with stochastic palette barycenter.}
\label{fig:color_avg}
\end{figure}

\newpage
\bibliographystyle{plainnat}
\bibliography{barycenter.bib}

\newpage
\appendix

\section{Technical Details}
\label{appx:details}
\subsection{Technical Lemmas}

\begin{lemma}\label{lem:phi_bounded_inf0}
Let $\Omega\subset\bbr^d$ be compact with 
\[
R\coloneqq\sup_{x \in\Omega}\norm{x}.
\]
Fix the quadratic cost $c(x,y)=\norm{x-y}^2/2$.
Let $\lambda\in\PS_{\textnormal{ac}}(\Omega)$ and let $\mu\in\PS(\Omega)$ be arbitrary (in particular, $\mu$ may be absolutely continuous or discrete).
Let $\phi_{\lambda\to\mu}$ be a $c$-concave Kantorovich potential that is finite on $\Omega$ and normalized so that $\inf_{x\in\Omega}\phi_{\lambda\to\mu}(x)=0$. Then, we have $\phi_{\lambda\to\mu}\in L^\infty(\Omega)$ and
\[
\norm{\phi_{\lambda\to\mu}}_{L^\infty(\Omega)} \le 2R^2.
\]
\end{lemma}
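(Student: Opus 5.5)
The approach is to bound the oscillation of the $c$-concave potential, $\sup_\Omega \phi - \inf_\Omega \phi$, directly from its representation as a $c$-transform; the normalization $\inf_\Omega \phi_{\lambda\to\mu}=0$ then turns that oscillation bound into the $L^\infty$ bound. Write $\phi=\phi_{\lambda\to\mu}$. Since $\phi$ is $c$-concave, there is a function $\psi$ on $\Omega$ (namely $\psi=\phi^c$) with
\[
\phi(x)=\inf_{y\in\Omega}\Bigl\{\tfrac12\norm{x-y}^2-\psi(y)\Bigr\},\qquad x\in\Omega .
\]
Because $\phi$ is finite on $\Omega$, $\psi$ is not identically $-\infty$ and is bounded above on the relevant set, so all quantities below are well defined.

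The key step is a Lipschitz-type oscillation estimate for an infimum of functions that are uniformly close to one another. Fix $x,x'\in\Omega$. For every $y\in\Omega$,
\[
\tfrac12\norm{x-y}^2-\psi(y)\le \tfrac12\norm{x'-y}^2-\psi(y)+\tfrac12\bigl(\norm{x-y}^2-\norm{x'-y}^2\bigr).
\]
The last term satisfies
\[
\tfrac12\bigl|\norm{x-y}^2-\norm{x'-y}^2\bigr|\le \tfrac12\bigl(\norm{x-y}^2+\norm{x'-y}^2\bigr)\le \tfrac12\bigl((2R)^2+(2R)^2\bigr)=4R^2,
\]
since every point of $\Omega$ lies in the ball of radius $R$ about the origin. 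That crude bound only gives an oscillation of $4R^2$, so I sharpen it by expanding instead:
\[
\tfrac12\norm{x-y}^2-\tfrac12\norm{x'-y}^2=\tfrac12\norm{x}^2-\tfrac12\norm{x'}^2-\inner{x-x',y}.
\]
This is at most $\tfrac12R^2+\norm{x-x'}\,\norm{y}\le \tfrac12R^2+2R\cdot R=\tfrac52R^2$. That is still not $2R^2$.

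A cleaner route is to use $\tfrac12\norm{x}^2-\tfrac12\norm{x'}^2\le \tfrac12 R^2$ together with $-\inner{x-x',y}\le \norm{x-x'}R$, or to bound the whole difference by $\tfrac12\norm{x-x'}\,\norm{x+x'-2y}\le \tfrac12(2R)(4R)=4R^2$. Neither gives $2R^2$ directly, so I would settle the constant as follows. Taking the infimum over $y$ on both sides gives $\phi(x)\le\phi(x')+\sup_{y}\tfrac12\bigl(\norm{x-y}^2-\norm{x'-y}^2\bigr)$. Choosing $x'$ as a near-minimizer of $\phi$, and using $\norm{x-y}^2\le (2R)^2$ together with $\norm{x'-y}^2\ge0$, gives $\phi(x)\le 0+\tfrac12(2R)^2+\epsilon=2R^2+\epsilon$. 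This is exactly the claimed constant, because it drops the subtracted nonnegative term instead of bounding it in absolute value. Letting $\epsilon\to0$ yields $0\le\phi\le 2R^2$ on $\Omega$, hence $\norm{\phi}_{L^\infty(\Omega)}\le 2R^2$.

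The main subtlety, and what I expect to be the obstacle, is justifying the $c$-concave representation on $\Omega$ with $\psi$ finite somewhere, including the case where $\mu$ is discrete. In that case $\psi$ is $-\infty$ off $\operatorname{supp}\mu$, or is given by the finite vector $\bm{\phi}$ as in the semi-discrete formulation. The inequality above only uses the infimum over those $y$ where $\psi(y)>-\infty$, and all such $y$ lie in $\Omega$, so the argument goes through unchanged. Measurability and essential boundedness follow because $\phi$ is an infimum of continuous functions, hence upper semicontinuous, and it is bounded.
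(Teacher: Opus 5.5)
Your final argument is correct and is essentially the paper's proof. Both write $\phi$ as a $c$-transform, bound $\phi(x)-\phi(x')\le\sup_{y\in\Omega}\tfrac12\bigl(\norm{x-y}^2-\norm{x'-y}^2\bigr)\le\sup_{y\in\Omega}\tfrac12\norm{x-y}^2\le 2R^2$ by discarding the nonnegative subtracted term, and then use the normalization $\inf_\Omega\phi=0$; the paper picks an $\varepsilon$-optimal $y_\varepsilon$ for $x'$ where you take the infimum over $y$ directly, which is equivalent, and your earlier $4R^2$ and $\tfrac52R^2$ attempts are unnecessary detours since only this one-sided bound is needed.
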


\begin{proof}
Since $\phi_{\lambda\to\mu}$ is $c$-concave and finite on $\Omega$, there exists an extended-real function
$\psi:\Omega\to\mathbb{R}\cup\{+\infty\}$ such that $\phi_{\lambda\to\mu}=\psi^c$, i.e.
\[
\phi_{\lambda\to\mu}(x)=\inf_{y\in\Omega}\mleft\{\frac{1}{2}\norm{x-y}_2^2-\psi(y)\mright\},\qquad x\in\Omega.
\]
If $\mu=\sum_{i=1}^m u_i\delta_{x_i}$ is discrete, one may take $\psi(y)=\psi_i$ for $y=x_i$ and $\psi(y)=+\infty$ otherwise,
so that the infimum reduces to a minimum over $i$. Fix $x,x'\in\Omega$ and $\varepsilon>0$. Since $\phi_{\lambda\to\mu}(x')$ is finite, there exists $y_\varepsilon\in\Omega$ with
$\psi(y_\varepsilon)<\infty$ such that
\[
\phi_{\lambda\to\mu}(x') \ge \frac{1}{2}\norm{x'-y_\varepsilon}_2^2-\psi(y_\varepsilon)-\varepsilon.
\]
By definition of the $c$-transform, for the same $y_\varepsilon$,
\[
\phi_{\lambda\to\mu}(x)\le \frac{1}{2}\norm{x-y_\varepsilon}_2^2-\psi(y_\varepsilon).
\]
Subtracting yields
\begin{align*}
\phi_{\lambda\to\mu}(x)-\phi_{\lambda\to\mu}(x')
&\le \frac{1}{2}\mleft(\norm{x-y_\varepsilon}_2^2-\norm{x'-y_\varepsilon}_2^2\mright)+\varepsilon \\
&\le \sup_{y\in\Omega}\frac{1}{2}\mleft(\norm{x-y}_2^2-\norm{x'-y}_2^2\mright)+\varepsilon \\
&\le \sup_{y\in\Omega}\frac{1}{2}\norm{x-y}_2^2+\varepsilon\\
&\le 2R^2+\varepsilon,
\end{align*}
since $\norm{x-y}_2\le 2R$ for all $x,y\in\Omega$.
Letting $\varepsilon\downarrow 0$ gives $\phi_{\lambda\to\mu}(x)-\phi_{\lambda\to\mu}(x')\le 2 R^2$.
Swapping $(x,x')$ yields
\[
\abs*{\phi_{\lambda\to\mu}(x)-\phi_{\lambda\to\mu}(x')}\le 2R^2 \qquad \forall x,x'\in\Omega.
\]

Normalize $\phi_{\lambda\to\mu}$ so that $\inf_{x\in\Omega}\phi_{\lambda\to\mu}(x)=0$.
Then for any $x\in\Omega$,
\[
0\le \phi_{\lambda\to\mu}(x)\le \inf_{x'\in\Omega}\phi_{\lambda\to\mu}(x') + 2R^2
= 2R^2,
\]
hence $\norm{\phi_{\lambda\to\mu}}_{L^\infty(\Omega)}\le 2R^2$.
\end{proof}

The following Lemma follows from the proof of \citep[Theorem 5]{yao2025}.

\begin{lemma} \label{lem:KL_boundedness}
Let $\rho^{k},\rho^{k+1}$ satisfy
\[
\rho^{k+1}(x)=\frac{1}{Z_k}\cdot\rho^{k}(x)\exp\bigl(-\eta g_k(x)\bigr),
\]
where $Z_k\coloneqq\int_\Omega \rho^{k}(y)\exp\bigl(-\eta g_k(y)\bigr)\odif{y}$ and $g_k\in L^\infty(\Omega)$. Then
\[
\KLdiv{\rho^{k}}{\rho^{k+1}}\le\frac{\eta^2}{2}\norm{g_k}_{L^\infty(\Omega)}^2 .
\]
\end{lemma}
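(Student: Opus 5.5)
Write $g_k \coloneqq \eta g_k/\eta$ for brevity and compute $\KLdiv{\rho^{k}}{\rho^{k+1}}$ directly from the explicit form of $\rho^{k+1}$. Since $\log(\rho^k/\rho^{k+1}) = \eta g_k + \log Z_k$, we get
\[
\KLdiv{\rho^{k}}{\rho^{k+1}} = \eta\,\E_{\rho^k}[g_k] + \log \E_{\rho^k}\bigl[e^{-\eta g_k}\bigr],
\]
because $Z_k = \E_{\rho^k}[e^{-\eta g_k}]$. Both terms are finite since $g_k\in L^\infty(\Omega)$ and $\rho^k$ is a probability density. The whole argument therefore reduces to bounding a centered log-moment-generating function.

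Set $X \coloneqq -\eta g_k(Y)$ with $Y\sim\rho^k$. The quantity above is $\log \E[e^{X}] - \E[X]$, the cumulant generating function of the centered variable $X-\E X$ evaluated at $1$. Since $\norm{g_k}_{L^\infty}\le M$, $X$ takes values in an interval $[-\eta M, \eta M]$ of length $2\eta M$. Hoeffding's lemma then gives $\log\E[e^{X-\E X}] \le (2\eta M)^2/8 = \eta^2 M^2/2$, which is exactly the claimed bound.

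For self-containment I would prove Hoeffding's lemma in the standard way. Let $\psi(t)=\log\E[e^{t(X-\E X)}]$. Then $\psi(0)=\psi'(0)=0$, and $\psi''(t)$ is the variance of $X$ under the tilted law $e^{tX}/\E e^{tX}$. Any variable supported in an interval of length $L$ has variance at most $L^2/4$, so $\psi''\le L^2/4$. Taylor's theorem with remainder gives $\psi(1)\le L^2/8$.

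The only delicate step is justifying the differentiation under the expectation, which is immediate since $X$ is bounded. I expect no real obstacle; the main care point is using the interval-length version of Hoeffding, giving $(2\eta M)^2/8$, rather than a cruder Taylor bound, which would lose a constant factor.
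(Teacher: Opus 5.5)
Your proof is correct, but it takes a different route from the paper. The paper never evaluates $\KLdiv{\rho^{k}}{\rho^{k+1}}$ directly. It starts from the symmetrized identity $\KLdiv{\rho^{k}}{\rho^{k+1}}+\KLdiv{\rho^{k+1}}{\rho^{k}}=\eta\int_\Omega g_k\,\mathrm{d}(\rho^k-\rho^{k+1})$ and bounds the right side by $\eta\norm{g_k}_{L^\infty(\Omega)}\,a$ with $a=\norm{\rho^k-\rho^{k+1}}_{L^1(\Omega)}$. It then lower-bounds the reverse divergence by $a^2/2$ via Pinsker and maximizes $\eta\norm{g_k}_{L^\infty(\Omega)}a-a^2/2$ over $a\ge 0$. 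You instead write the forward divergence exactly as the centered log-moment-generating function $\log\E_{\rho^k}[e^{X-\E X}]$ with $X=-\eta g_k$, and apply Hoeffding's lemma; the constant is identical.

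The paper's argument is the generic mirror-descent template: a symmetrized Bregman identity plus $1$-strong convexity of negative entropy in $L^1$, which is what Pinsker encodes. It therefore transfers verbatim to any mirror map that is strongly convex with respect to a norm. Yours uses the exponential-tilt structure specific to the entropic mirror map, but in exchange it is shorter and gives an exact formula. It also makes clear that only the range of $g_k$ matters, since Hoeffding yields $\frac{\eta^2}{8}\bigl(\operatorname{ess\,sup}g_k-\operatorname{ess\,inf}g_k\bigr)^2$. With the paper's normalization $0\le\phi_{\lambda^{k}\to\mu_i}\le 2R^2$, this improves the per-step term in Theorem~\ref{thm:main} from $2R^4\eta_k^2$ to $R^4\eta_k^2/2$. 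The paper's route recovers the same improvement by centering $g_k$, since $\int_\Omega\mathrm{d}(\rho^k-\rho^{k+1})=0$.

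Two cosmetic points. The opening phrase defining $g_k\coloneqq\eta g_k/\eta$ is vacuous and should be dropped. The range statement $X\in[-\eta M,\eta M]$ should be stated as holding $\rho^k$-almost surely, because $g_k$ is only an $L^\infty$ class and $\rho^k$ is absolutely continuous.
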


\begin{proof}
We let $c_k\coloneqq\log Z_k$, then it holds that $\log[\rho^{k}(x)/\rho^{k+1}(x)]=\eta g_k(x)+c_k$. Using the standard identity for symmetrized KL,
\begin{align*}
\KLdiv{\rho^{k}}{\rho^{k+1}}+\KLdiv{\rho^{k+1}}{\rho^{k}}
&=\int_\Omega\log\frac{\rho^{k}}{\rho^{k+1}}\odif{(\rho^{k}-\rho^{k+1})}\\
&=\int_\Omega\big(\eta g_k+c_k\big)\odif{(\rho^{k}-\rho^{k+1})}\\
&=\eta\int_\Omega g_k\odif{(\rho^{k}-\rho^{k+1})},
\end{align*}
where the constant term vanishes since
\(\int_\Omega \d(\rho^{k}-\rho^{k+1})=0\).
Hence, by $L^\infty$--$L^1$ duality,
\begin{equation}\label{eq:symKL-L1}
\KLdiv{\rho^{k}}{\rho^{k+1}}+\KLdiv{\rho^{k+1}}{\rho^{k}}
\le \eta\norm{g_k}_{L^\infty(\Omega)}\norm*{\rho^{k}-\rho^{k+1}}_{L^1(\Omega)}.
\end{equation}
Next, by Pinsker's inequality,
\begin{equation}\label{eq:pinsker}
\KLdiv{\rho^{k+1}}{\rho^{k}} \ge \frac12\norm*{\rho^{k+1}-\rho^{k}}_{L^1(\Omega)}^2.
\end{equation}
Combining \eqref{eq:symKL-L1}--\eqref{eq:pinsker} yields
\[
\KLdiv{\rho^{k}}{\rho^{k+1}}
\le
\eta\norm{g_k}_{L^\infty(\Omega)}\cdot a - \frac{a^2}{2},\quad
\text{where }
a\coloneqq\norm*{\rho^{k}-\rho^{k+1}}_{L^1(\Omega)}.
\]
Finally, maximizing the right-hand side over $a\ge 0$ gives $
\eta\norm{g_k}_\infty\cdot a - a^2/2
\le (\eta^2/2)\norm{g_k}_\infty^2$,
which completes the proof.
\end{proof}

\paragraph{Gaussian smoothing.} Here, we describe the procedure of Gaussian smoothing and state two lemmas related to it. Let $\widehat{\mu}=\sum_{j=1}^m u_j \delta_{x_j}$ be a discrete probability measure on $\Omega\subset\mathbb R^d$, and let $X\sim \widehat{\mu}$ and $Z\sim \mathcal N(0,I_d)$ be independent, where $\mathcal N(0,I_d)$ denotes the standard $d$-dimensional normal distribution. For $\varepsilon>0$, define $\widetilde{\mu}_{\varepsilon}$ to be the distribution of $Y \coloneqq X + \varepsilon Z$. Then $\widetilde{\mu}_{\varepsilon}$ is absolutely continuous with density $\widetilde{\rho}_{\varepsilon}$ given by the Gaussian mixture
\[
\widetilde{\rho}_{\varepsilon}(y)
= \sum_{j=1}^m u_j\widetilde{f}_\varepsilon(y - x_j),
\]
where \[\widetilde{f}_\varepsilon(x)
= (2\pi \varepsilon^2)^{-d/2}
\exp\mleft(-\frac{\norm{x}_2^2}{2\varepsilon^2}\mright).\] Let $\mu_{\varepsilon}$ denote the distribution obtained by truncating $\widetilde{\mu}_{\varepsilon}$ to $\Omega$ and renormalizing:
$
\mu_{\varepsilon}(A)
\coloneqq\widetilde{\mu}_{\varepsilon}(A\cap \Omega)/\widetilde{\mu}_{\varepsilon}(\Omega)$ for any $A\in\mathcal B(\mathbb R^d)$.
Then, $\mu_{\varepsilon}$ admits a density on $\Omega$, denoted by $\rho_{\varepsilon}$, of the form $\rho_{\varepsilon}(x)
= \widetilde{\rho}_{\varepsilon}(x)/\widetilde{\mu}_{\varepsilon}(\Omega), x\in \Omega$.

Let $\partial \Omega$ be the boundary of $\Omega$ and  $\operatorname{dist}(x,\partial\Omega) = \inf_{y \in \partial \Omega}\norm{x - y}_2$. The next lemma bounds the $2$-Wasserstein distance between
$\mu_\varepsilon$ and $\widehat{\mu}$.

\begin{lemma}\label{lem:W2_mollified}
Assuming that $\Omega$ is compact and 
there exists $r>0$ such that for any $j$,
\[
\operatorname{dist}(x_j,\partial\Omega)\ge r.
\]
Then, for any $\varepsilon\in\bigl(0,r/(2\sqrt{d})\bigr]$, we have
\[
\Wass_2(\mu_\varepsilon,\widehat{\mu})
\le \sqrt d\,\varepsilon
+ C_{R, d}\exp\mleft(-\frac{r^2}{32\,\varepsilon^2}\mright),
\]
where $C_{R, d}>0$ is a constant depending on $d$ and $R = \sup_{x \in \Omega}\norm{x}_2$.
\end{lemma}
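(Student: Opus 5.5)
We bound $\Wass_2(\mu_\varepsilon,\widehat{\mu})$ by the triangle inequality through the untruncated Gaussian mixture $\widetilde{\mu}_\varepsilon$:
\[
\Wass_2(\mu_\varepsilon,\widehat{\mu}) \le \Wass_2(\widetilde{\mu}_\varepsilon,\widehat{\mu}) + \Wass_2(\mu_\varepsilon,\widetilde{\mu}_\varepsilon).
\]
Here $\Wass_2$ is understood on $\mathbb{R}^d$, since $\widetilde{\mu}_\varepsilon$ is not supported in $\Omega$.

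For the first term we use the coupling $(X, X+\varepsilon Z)$. This gives
\[
\Wass_2^2(\widetilde{\mu}_\varepsilon,\widehat{\mu})\le \E\norm{\varepsilon Z}^2 = d\varepsilon^2,
\]
which yields the $\sqrt d\,\varepsilon$ term.

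For the second term we quantify the mass lost by truncation. Let $p\coloneqq \widetilde{\mu}_\varepsilon(\Omega^c)$. Since $\operatorname{dist}(x_j,\partial\Omega)\ge r$ for every atom, the event $Y\notin\Omega$ forces $\norm{\varepsilon Z}\ge r$. Hence
\[
p\le \Prob(\norm{Z}\ge r/\varepsilon).
\]
We apply a standard Gaussian norm concentration bound: $\Prob(\norm{Z}\ge \sqrt d + t)\le e^{-t^2/2}$. The condition $\varepsilon\le r/(2\sqrt d)$ gives $\sqrt d\le r/(2\varepsilon)$, so $t\ge r/(2\varepsilon)$. This yields $p\le \exp(-r^2/(8\varepsilon^2))$.

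Next we couple $\widetilde{\mu}_\varepsilon$ and $\mu_\varepsilon$. Write
\[
\widetilde{\mu}_\varepsilon=(1-p)\mu_\varepsilon + p\,\nu,
\]
where $\nu$ is the normalized restriction to $\Omega^c$. Keep the common part $(1-p)\mu_\varepsilon$ fixed, and transport the remaining mass $p\nu$ onto $p\mu_\varepsilon$ through the product coupling. This gives
\[
\Wass_2^2(\mu_\varepsilon,\widetilde{\mu}_\varepsilon)\le p\int\!\!\int \norm{y-x}^2\,\d\nu(y)\,\d\mu_\varepsilon(x)\le 2p\Bigl(R^2+\E[\norm{Y}^2\mid Y\notin\Omega]\Bigr).
\]

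The main obstacle is this tail-moment factor, because $\nu$ has unbounded support. We handle it directly. Since $\norm{Y}\le R+\varepsilon\norm{Z}$, we have
\[
p\,\E[\norm{Y}^2\mid Y\notin\Omega]\le \E\bigl[(R+\varepsilon\norm{Z})^2\,\mathbf 1\{\norm{Z}\ge r/\varepsilon\}\bigr].
\]
By Cauchy--Schwarz this is at most
\[
\bigl(\E(R+\varepsilon\norm Z)^4\bigr)^{1/2}\,p^{1/2}\le C'_{R,d}\,\exp\bigl(-r^2/(16\varepsilon^2)\bigr),
\]
using $\varepsilon\le r/(2\sqrt d)\le R/\sqrt d$ to bound the moment by a constant depending on $R$ and $d$ only. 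Since $r\le R$ (a point at distance $r$ from the boundary of a set of radius $R$), constants involving $r$ can also be absorbed into $C_{R,d}$.

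Combining the two pieces gives
\[
\Wass_2^2(\mu_\varepsilon,\widetilde{\mu}_\varepsilon)\le C\exp\bigl(-r^2/(16\varepsilon^2)\bigr).
\]
Taking square roots gives $C_{R,d}\exp(-r^2/(32\varepsilon^2))$, and adding the first-term bound $\sqrt d\,\varepsilon$ proves the lemma.
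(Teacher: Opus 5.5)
Your proposal is correct and follows the paper's route. Both arguments use the triangle inequality through $\widetilde{\mu}_\varepsilon$ and the coupling $(X,X+\varepsilon Z)$ for the $\sqrt d\,\varepsilon$ term. For the truncation term, both use the coupling that keeps $Y$ on $\{Y\in\Omega\}$ and pairs it with an independent draw from $\mu_\varepsilon$ otherwise, followed by Cauchy--Schwarz and the tail bound $\Prob(\norm{Z}_2\ge r/\varepsilon)\le \exp(-r^2/(8\varepsilon^2))$. Your version is slightly more careful than the paper's: you show via $r\le R$ that the $\varepsilon$-dependent fourth-moment factor is bounded by a constant depending only on $R$ and $d$, and you note that $\Wass_2$ must be taken on $\mathbb{R}^d$.
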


\begin{proof}
Let $X\sim\widehat{\mu}$ and $Z\sim\mathcal N(0,I_d)$ be independent, and define
$Y\coloneqq X+\varepsilon Z$. Then $Y\sim\widetilde{\mu}_\varepsilon$. Consider the coupling
between $\widehat{\mu}$ and $\widetilde{\mu}_\varepsilon$ given by the joint law of
$(X,Y)$. By definition of $W_2$,
\[
\Wass_2^2(\widetilde{\mu}_\varepsilon,\widehat{\mu})
\le\E\norm{Y-X}_2^2
= \varepsilon^2\E\norm{Z}_2^2
= d\varepsilon^2.
\]
Next, let $\mu_\varepsilon$ be the truncation and renormalization of
$\widetilde{\mu}_\varepsilon$ to $\Omega$. By the triangle inequality,
\[
\Wass_2(\mu_\varepsilon,\widehat{\mu})
\le \Wass_2(\mu_\varepsilon,\widetilde{\mu}_\varepsilon)
+ \Wass_2(\widetilde{\mu}_\varepsilon,\widehat{\mu}).
\]

We then bound $\Wass_2(\mu_\varepsilon,\widetilde{\mu}_\varepsilon)$. Notice that
\[
\Wass_2^2(\mu_\varepsilon,\widetilde{\mu}_\varepsilon)
\le
\E\left[
\norm{Y - X'}_2^2 \bm{1}_{\{Y\notin\Omega\}}
\right],
\]
where $X'\sim\mu_\varepsilon$. Since $X'\in\Omega$, we have $\norm{X'}_2\le R$. Moreover, writing $Y = X + \varepsilon Z$ with $\norm{X}_2 \le R$, we obtain $\norm{Y}_2 \le R + \varepsilon \norm{Z}_2$. Hence, by the triangle inequality,
\[
\norm{Y - X'}_2
\le
\norm{Y}_2 + \norm{X'}_2
\le
2R + \varepsilon \norm{Z}_2.
\]
Therefore, we have  $
\Wass_2^2(\mu_\varepsilon,\widetilde{\mu}_\varepsilon)
\le\E\left[
(2R + \varepsilon\norm{Z}_2)^2\bm{1}_{\{Y\notin\Omega\}}
\right]$. Finally, since $\{Y\notin\Omega\}\subseteq\{\varepsilon\norm{Z}_2\ge r\}$, we obtain
\begin{align*}
\Wass_2^2(\mu_\varepsilon,\widetilde{\mu}_\varepsilon)
&\le
\E\mleft[
(2R+\varepsilon\norm{Z}_2)^2
\bm{1}_{\{\norm{Z}_2 \ge r/\varepsilon\}}
\mright] \\
&\le
\left\{
8(2R)^4 + 8\varepsilon^4\E\norm{Z}_2^4
\right\}^{1/2}
\Prob(\norm{Z}_2 \ge r/\varepsilon)^{1/2} \\
&\le
C_{R, d}
\Prob(\norm{Z}_2 \ge r/\varepsilon)^{1/2} \\
&\le
C_{R, d} \exp\mleft(-\frac{r^2}{16\varepsilon^2}\mright).
\end{align*}
The lemma is concluded by combining the results.
\end{proof}

\begin{lemma}\label{lem:KL_mollified}
Let $\widehat{\mu}=\sum_{j=1}^m u_j \delta_{x_j}$ and $\rho^0$ be a density on $\Omega$ satisfying
$0 < a \le \rho^0(x) \le b < \infty$ for all $ x\in\Omega$.
Assuming that $\Omega\subset\bbr^d$ is compact, and there exists $r>0$ such that for any $j$,
\[
\operatorname{dist}(x_j,\partial\Omega)\ge r.
\]
Then, for any $\varepsilon\in\bigl(0,r/(2\sqrt d)\bigr]$, we have
\[
\KLdiv{\rho_\varepsilon}{\rho^0}
\le C_\textnormal{KL}
+ d\log(1/\varepsilon),
\]
where $C_\textnormal{KL}\coloneqq -(d/2)\log(2\pi)-\log ( 1- \exp(- d/2  ) ) -\log a$.
\end{lemma}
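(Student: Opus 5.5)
We prove the bound by writing out the KL divergence, bounding the density $\rho_\varepsilon$ from above by its normalization and the Gaussian peak, and then integrating. By definition,
\[
\KLdiv{\rho_\varepsilon}{\rho^0}=\int_\Omega \rho_\varepsilon\log\rho_\varepsilon\,\odif{x}-\int_\Omega\rho_\varepsilon\log\rho^0\,\odif{x}.
\]
Since $\rho^0\ge a$ and $\rho_\varepsilon$ is a probability density on $\Omega$, the second term is at most $-\log a$. It remains to bound $\int_\Omega\rho_\varepsilon\log\rho_\varepsilon$ from above, and for that a pointwise upper bound on $\rho_\varepsilon$ suffices. We do not attempt a full entropy computation of the Gaussian mixture.

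For the pointwise bound, each Gaussian kernel satisfies $\widetilde f_\varepsilon(y-x_j)\le(2\pi\varepsilon^2)^{-d/2}$. Since $\sum_j u_j=1$, it follows that $\widetilde\rho_\varepsilon(y)\le(2\pi\varepsilon^2)^{-d/2}$. Hence on $\Omega$,
\[
\rho_\varepsilon(x)\le\frac{(2\pi\varepsilon^2)^{-d/2}}{\widetilde\mu_\varepsilon(\Omega)},
\]
and so
\[
\int_\Omega\rho_\varepsilon\log\rho_\varepsilon\le-\tfrac d2\log(2\pi)+d\log(1/\varepsilon)-\log\widetilde\mu_\varepsilon(\Omega).
\]

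The remaining and main step is a lower bound on the normalizing mass $\widetilde\mu_\varepsilon(\Omega)$ of the form
\[
\widetilde\mu_\varepsilon(\Omega)\ge 1-\exp(-d/2).
\]
The route is as follows.
\begin{itemize}
\item Since $\operatorname{dist}(x_j,\partial\Omega)\ge r$ and $x_j\in\Omega$, the ball $B(x_j,r)$ lies in $\Omega$. This uses that $\Omega$ is the closure of its interior near $x_j$; convexity makes this clean.
\item Therefore
\[
\widetilde\mu_\varepsilon(\Omega)\ge\sum_j u_j\,\Prob(\varepsilon\norm{Z}\le r)=1-\Prob(\norm{Z}>r/\varepsilon).
\]
\item The condition $\varepsilon\le r/(2\sqrt d)$ gives $r/\varepsilon\ge2\sqrt d$.
\item Apply a Gaussian norm concentration bound, $\Prob(\norm{Z}\ge\sqrt d+t)\le e^{-t^2/2}$ (Lipschitz concentration, since $\E\norm Z\le\sqrt d$). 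With $t=\sqrt d$ this gives
\[
\Prob(\norm Z\ge2\sqrt d)\le e^{-d/2}.
\]
\end{itemize}

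Combining the three pieces yields
\[
\KLdiv{\rho_\varepsilon}{\rho^0}\le-\tfrac d2\log(2\pi)-\log\bigl(1-e^{-d/2}\bigr)-\log a+d\log(1/\varepsilon),
\]
which is exactly $C_{\mathrm{KL}}+d\log(1/\varepsilon)$.

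The obstacle I expect is this concentration step. One needs a tail bound that produces the specific constant $e^{-d/2}$ at radius $2\sqrt d$. A crude Chernoff bound on $\chi^2_d$ gives $\Prob(\norm Z^2\ge4d)\le(4e^{-3})^{d/2}$, which is even smaller than $e^{-d/2}$, so either route works. I would cite the Lipschitz Gaussian concentration (Borell–TIS/Tsirelson–Ibragimov–Sudakov) inequality.
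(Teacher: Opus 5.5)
Your proposal is correct and follows the paper's proof essentially step for step. You use the same KL split, the same pointwise bound $\rho_\varepsilon \le (2\pi\varepsilon^2)^{-d/2}/\widetilde\mu_\varepsilon(\Omega)$, and the same Gaussian norm tail bound to get $\widetilde\mu_\varepsilon(\Omega)\ge 1-e^{-d/2}$; the paper passes through the intermediate bound $\exp(-r^2/(8\varepsilon^2))$, whereas you apply monotonicity at radius $2\sqrt d$ directly. One small correction to your justification: the containment $B(x_j,r)\subseteq\Omega$ needs no convexity or closure-of-interior assumption --- it follows from $x_j\in\Omega$ and the connectedness of the ball, a step the paper simply asserts.
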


\begin{proof}
Recall that $\rho_\varepsilon(x)=\widetilde{\rho}_\varepsilon(x)/\widetilde{\mu}_\varepsilon(\Omega)$
for $x\in\Omega$, where
\[
\widetilde{\rho}_\varepsilon(x)
=\sum_{j=1}^m u_j\,\widetilde{f}_\varepsilon(x-x_j),
\quad\text{and}\quad
\widetilde{f}_\varepsilon(z)
=(2\pi\varepsilon^2)^{-d/2}
\exp\mleft(-\frac{\norm{z}_2^2}{2\varepsilon^2}\mright).
\]
We decompose
\[
\KLdiv{\rho_\varepsilon}{\rho^0}
= \int_\Omega \rho_\varepsilon(x)\log\rho_\varepsilon(x)\odif{x}
-\int_\Omega \rho_\varepsilon(x)\log\rho^0(x)\odif{x}.
\]
Since $\widetilde{\rho}_\varepsilon(x)\le (2\pi\varepsilon^2)^{-d/2}$ for all $x$,
we have
\[
\rho_\varepsilon(x)
\le \frac{(2\pi\varepsilon^2)^{-d/2}}{\widetilde{\mu}_\varepsilon(\Omega)}.
\]
Let $
\alpha_\varepsilon \coloneqq \widetilde{\mu}_\varepsilon(\Omega)=\Prob(Y\in\Omega)$ and $
\delta_\varepsilon \coloneqq 1-\alpha_\varepsilon=\Prob(Y\notin\Omega)$. Since 
$\operatorname{dist}(X,\partial\Omega)\ge r$ a.s., $\{Y\notin\Omega\}\subseteq \{\norm{Y-X}_2\ge r\}
\subseteq \{\varepsilon\norm{Z}_2\ge r\}$. Hence, for $\varepsilon\le r/(2\sqrt d)$,
\begin{align*}
\delta_\varepsilon
\le \Prob(\norm{Z}_2\ge r/\varepsilon)
\le \exp\mleft(-\frac{(r/\varepsilon-\sqrt d)^2}{2}\mright)
\le \exp\mleft(-\frac{r^2}{8\,\varepsilon^2}\mright),
\end{align*}
where we used the Gaussian norm tail bound
$\Prob(\norm{Z}_2\ge t)\le \exp(-(t-\sqrt d)^2/2)$ for $t\ge \sqrt d$. This also implies that 
\begin{align} \label{eq:alpha}
    \alpha_{\varepsilon} \geq 1- \exp\mleft(-\frac{r^2}{8\,\varepsilon^2}\mright) \geq  1- \exp\mleft(-\frac{d}{2}\mright).
\end{align}

Using $\int_\Omega \rho_\varepsilon=1$ and $\log\rho_\varepsilon(x)\le\log\norm{\rho_\varepsilon}_\infty$,
\[
\int_\Omega \rho_\varepsilon(x)\log\rho_\varepsilon(x)\odif{x}
\le -\frac d2\log(2\pi) - d\log\varepsilon - \log \alpha_{\varepsilon} .
\]
Since $\rho^0(x)\ge a$ on $\Omega$, we have $-\log\rho^0(x)\le -\log a$, hence
\[
-\int_\Omega \rho_\varepsilon(x)\log\rho^0(x)\odif{x}
\le -\log a .
\]
Combining the above bounds yields
\[
\KLdiv{\rho_\varepsilon}{\rho^0}
\le\mleft(-\frac d2\log(2\pi)-\log \alpha_{\varepsilon}-\log a\mright)
+ d\log(1/\varepsilon).
\]
\end{proof}

\subsection{Proof of Theorem \ref{thm:main}} 
\begin{proof}
If at least one of $\{\mu_1, \dots, \mu_n \}$ is absolutely continuous, then the minimizer $\lambda^\star$ of $\mathcal{E}$ is absolutely continuous. Let $\rho^{k}$ and $\rho^*$  be the densities  of $\lambda^{k}$ and $\lambda^*$ respectively.  The KL-mirror descent update in Algorithm \ref{alg:main} takes the form
\[
\rho^{k+1}(x)
=
\frac{1}{Z^{k}}\cdot\rho^{k}(x)\exp\mleft\{-\eta_k g^{k}(x)\mright\},
\]
where $Z^{k}\coloneqq\int_\Omega \rho^{k}(y)\exp\{-\eta_k g^{k}(y)\}\,dy$ and $g^{k}(x)=\frac{\delta \mathcal{E}}{\delta \rho}(\rho^{k})(x)=\sum_{i=1}^n w_i\,\phi_{\rho^{k}\to \mu_i}(x)$. By Lemma \ref{lem:phi_bounded_inf0}, each normalized Kantorovich potential satisfies
\(
\norm{\phi_{\rho^{k}\to \mu_i}}_{L^\infty(\Omega)}\le 2R^2
\).
Therefore,
\[
\norm{g^{k}}_{L^\infty(\Omega)}
\le
\sum_{i=1}^n w_i\norm{\phi_{\nu^{k}\to \mu_i}}_{L^\infty(\Omega)}
\le
2R^2
\eqqcolon G.
\]
From the update formula, we have $
\log(\rho^{k}(x)/\rho^{k+1}(x))
= \eta_k g^{k}(x)+c^{k} 
$, where $c_k\coloneqq\log Z_k$. Consequently,
\begin{align*}
\int_\Omega g^{k}(x)\bigl(\rho^{k}(x)-\rho(x)\bigr)\odif{x}&=
\frac{1}{\eta_k}
\int_\Omega
\log\frac{\rho^{k+1}(x)}{\rho^{k}(x)}
\bigl(\rho^{k}(x)-\rho(x)\bigr)\odif{x}\\
&=
\frac{1}{\eta_k}
\Bigl(
\KLdiv{\rho^{k}}{\rho^{k+1}}
+
\KLdiv{\rho}{\rho^{k}}
-
\KLdiv{\rho}{\rho^{k+1}}
\Bigr).
\end{align*}

By Lemma \ref{lem:KL_boundedness},
\(
\KLdiv{\rho^{k}}{\rho^{k+1}}
\le
(\eta^2_k/2)\norm{g^{k}}_{L^\infty(\Omega)}^2
\),
and hence
\[
\int_\Omega g^{k}(x)\bigl(\rho^{k}(x)-\rho(x)\bigr)\odif{x}
\le
\frac1\eta_k
\Bigl(
\KLdiv{\rho}{\rho^{k}}
-
\KLdiv{\rho}{\rho^{k+1}}
\Bigr)
+
\frac{\eta_k}{2}\norm{g^{k}}_{L^\infty(\Omega)}^2 .
\]
Since \(\mathcal E\) is convex along linear mixtures of densities, \(g^{k}\) is a subgradient of
\(\mathcal E\) at \(\rho^{k}\), we have for any $\nu \in \PS_{\textnormal{ac}} (\Omega)$ with density \(\rho\),
\[
\mathcal E(\rho^{k})- \mathcal E(\rho)
\le
\int_\Omega g^{k}(x)\bigl(\rho^{k}(x)-\rho(x)\bigr)\odif{x}.
\]
Combining the above inequality with the previous bound yields
\[
 \eta_k (\mathcal E (\rho^{k})- \mathcal E(\rho) )
\le
\KLdiv{\rho}{\rho^{k}}
-
\KLdiv{\rho}{\rho^{k+1}}
+
2 R^4 \eta_k^2 .
\]
Summing the above inequality for \(k=0,\dots,T\) gives
\[
\sum_{k=0}^{T} \eta_k
\bigl(\mathcal E(\rho^{k}) - \mathcal E(\rho)\bigr)
\le
\KLdiv{\rho}{\rho^{0}}
+2 R^4
\sum_{k=0}^{T} \eta_k^2  .
\]
By taking minimization over $k$, we have
\begin{align}
\label{eq:inequality}
\min_{0\le k\leq T}
\bigl(\mathcal E(\rho^{k})-\mathcal E(\rho)\bigr)
\le
\frac{\KLdiv{\rho}{\rho^{0}}}{\sum_{k=0}^T \eta_k}
+ 2 R^4 
\frac{\sum_{k=0}^T \eta_k^2}{\sum_{k=0}^T \eta_k} .    
\end{align}
Since the above inequality holds for any $\rho$, setting $\rho = \rho^*$ proves the theorem.
\end{proof}

\subsection{Proof of Theorem \ref{thm:discrete}}
\begin{proof}
We follow the Gaussian smoothing procedure described above Lemma \ref{lem:KL_mollified}.
Let $\lambda^\star=\sum_{j=1}^m u_j \delta_{x_j}$ be discrete and
$X \sim \lambda^\star$. For $\varepsilon>0$, define $\widetilde{\lambda}_{\varepsilon}$ to be the distribution of $Y \coloneqq X + \varepsilon Z$ with density $\widetilde\rho_{\varepsilon}$. Let $\lambda_{\varepsilon}$ denote the truncated distribution of $\widetilde{\lambda}_{\varepsilon}$ to $\Omega$. The density of $\lambda_{\varepsilon}$ is denoted as $\rho_{\varepsilon}$. 

By letting $\rho = \rho_{\varepsilon}$ in Equation \ref{eq:inequality}, we have 
\begin{align*}
\min_{0 \leq k \leq T} \big(\mathcal E(\lambda^{k})-\mathcal E(\lambda_\varepsilon)\big) \leq \frac{\KLdiv{\rho_\varepsilon}{\rho^{0}}}{ \sum_{k=0}^T \eta_k } + 2 R^4 \frac{\sum_{k=0}^T \eta_k^2}{\sum_{k=0}^T \eta_k}.
\end{align*}
On the other hand, for each $i$, by the triangle inequality, $\abs*{W_2(\lambda_\varepsilon,\mu_i)-W_2(\lambda^\star,\mu_i)}
\le W_2(\lambda_\varepsilon,\lambda^\star)$, and since $\Omega$ is compact, $\Wass_2(\cdot,\cdot)\le 2R$ on $\PS(\Omega)$, hence
$W_2(\lambda_\varepsilon,\mu_i)+W_2(\lambda^\star,\mu_i)\le 4R $.
Therefore,
\[
W_2^2(\lambda_\varepsilon,\mu_i)-W_2^2(\lambda^\star,\mu_i)
\le 4R \,W_2(\lambda_\varepsilon,\lambda^\star).
\]
Summing over $i$ with weights gives
$
\mathcal E(\lambda_\varepsilon)-\mathcal E(\lambda^\star)
\le 4R \,W_2(\lambda_\varepsilon,\lambda^\star).
$
Finally, taking $\min_{0\le k<T}$ and combining the bounds yields 
\begin{align*}
 \min_{0 \leq k \leq T}\calE(\lambda^{k})-\calE(\lambda^\star) 
= & \min_{0 \leq k \leq T} \big(\calE(\lambda^{k})-\calE(\lambda_\varepsilon)\big) + \big(\calE(\lambda_\varepsilon)-\calE(\lambda^\star)\big) \\
\leq & \frac{\KLdiv{\rho_\varepsilon}{\rho^{0}}}{ \sum_{k=0}^T \eta_k } + 2 R^4 \frac{\sum_{k=1}^T \eta_k^2}{\sum_{k=0}^T \eta_k} + 4R\Wass_2(\lambda_\varepsilon,\lambda^\star).
\end{align*}
Using Lemmas~\ref{lem:KL_mollified} and \ref{lem:W2_mollified}, we obtain
\begin{align*}
& \min_{0\le k\le T}\big(\calE(\lambda^k)-\calE(\lambda^\star)\big) \\
&\qquad\le
\frac{C_\textnormal{KL}}{\sum_{k=0}^T \eta_k}
+\frac{d\log(1/\varepsilon)}{\sum_{k=0}^T \eta_k}
+2 R^4\frac{\sum_{k=0}^T \eta_k^2}{\sum_{k=0}^T \eta_k}
+ 4R \mleft( \sqrt d\,\varepsilon
+ C_{R, d}\exp\mleft(-\frac{r^2}{32 \,\varepsilon^2}\mright) \mright).
\end{align*}
Choosing $\varepsilon=T^{-1/2} \leq \frac{r}{2\sqrt{d}}$ yields
\begin{align*}
& \min_{0\le k\le T}\big(\mathcal E(\lambda^k)-\mathcal E(\lambda^\star)\big) \\
&\qquad\le
\frac{C_\textnormal{KL}}{\sum_{k=0}^T \eta_k}
+\frac{d}{2}\frac{\log T}{\sum_{k=0}^T \eta_k}
+2 R^4 \frac{\sum_{k=0}^T \eta_k^2}{\sum_{k=0}^T \eta_k}
+ 4R \mleft(\sqrt d\,T^{-1/2}
+ C_{R, d} \exp\mleft(-\frac{r^2T}{32}\mright)\mright) \\
&\qquad= 
O\mleft(
\frac{\log T}{\sum_{k=0}^T \eta_k}
+\frac{\sum_{k=0}^T \eta_k^2}{\sum_{k=0}^T \eta_k}
+\frac{1}{\sqrt{T}}
\mright).
\end{align*}
For either choice of step size, $\eta_k = 1/\sqrt{k}$ or $\eta_k = 1/\sqrt{T}$, the right-hand side is of order $O\mleft(\log T/\sqrt{T}\mright)$.
\end{proof}

\newpage
\section{Explicit Mirror Descent for Gaussian Measures}
\label{appx:gaussian}
When all input measures are Gaussian, it is well known that the Wasserstein barycenter is also Gaussian 
\citep{AguehCarlier2011}. Assuming that the mirror descent iterate $\lambda^k$ is Gaussian, i.e.~$\lambda^k\sim N(0,S_k)$, the associated Kantorovich potentials and the KL-divergence admit closed-form expressions \[\phi_{\lambda^{k}\to\mu_i}(x)=\frac{1}{2}x^\top(I- A_{i,k})x,\qquad A_{i,k}=S_k^{-1/2}(S_k^{1/2}\Sigma_iS_k^{1/2})^{1/2}S_k^{-1/2},\]
and  \[\KLdiv{\lambda}{\lambda^{k}}=\frac{1}{2}\left(\Tr\bigl(S_k^{-1}S\bigr)-\ln\frac{\abs{S}}{\abs{S_k}}-d\right).\] By noting \(\E_{\lambda}[\phi_{\lambda^{k}\to\mu_i}]=[\Tr(S)-\Tr(A_{i,k
}S)]/2\), the mirror descent objective \eqref{eq:inner} can be written explicitly as \[\calL(S)=\frac{1}{2}\left[\Tr(S)-\sum_{i=1}^{n}w_i\Tr(A_{i,k}S)+\frac{2}{\eta_k}\KLdiv{\lambda}{\lambda^{k}}\right],\] along with its gradient \[\pdv{\calL}{S}=\frac{1}{2}\left(I-\sum_{i=1}^{n}w_iA_{i,k}+\frac{1}{\eta_k}(S_k^{-1}-S^{-1})\right).\] The first-order optimality condition then yields an explicit update rule for the precision matrix: \[S_{k+1}^{-1}=S_k^{-1}+\eta_k\left(I-\sum_{i=1}^{n}w_iA_{i,k}\right).\] This approach requires only the computation of geometric means of covariance matrices and therefore its per-iteration computational cost is comparable to that of Bures--Wasserstein gradient descent.

We evaluate this explicit mirror descent method by computing the barycenter of $100$ Gaussian measures on $\bbr^{50}$ with randomly generated covariance matrices. As shown in \Cref{fig:comp_gd}, the performance of our method closely matches that of Bures--Wasserstein gradient descent, converging only a few iterations after the latter.

\begin{figure}[!htbp]
\centering
\includegraphics[width=0.9\linewidth]{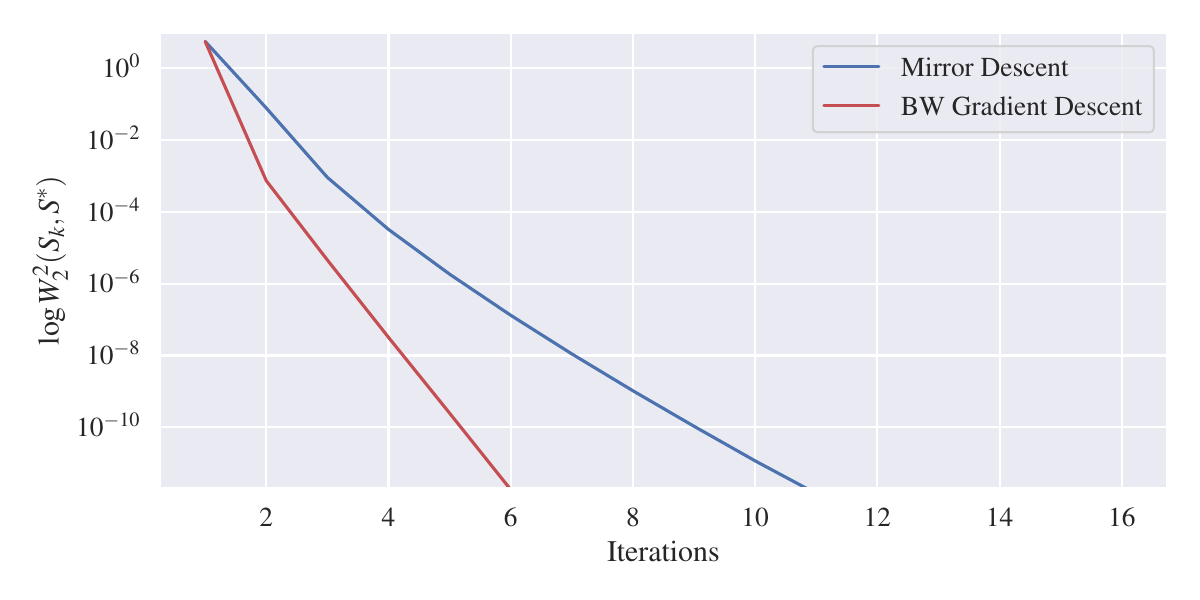}
\caption{Comparison with Bures--Wasserstein gradient descent. Performance is measured by the Bures--Wasserstein distance to the ground truth.}
\label{fig:comp_gd}
\end{figure}

\newpage
\section{Implementation Details}
\label{appx:num_detail}
The collection of input measures ${\mu_1,\dots,\mu_n}$ in Algorithm~\ref{alg:main} may be of mixed types, with some measures being discrete and others absolutely continuous. In this section, we provide detailed implementation of Algorithm~\ref{alg:main} adapted to different input types.

For Gaussian measures, the Kantorovich potential admits a closed-form expression. Outside this special case, the potential needs to be computed using iterative algorithms. Absolutely continuous measures are infinite-dimensional objects and must be approximated by finite-dimensional representations for computational purposes, typically through parameterizations via neural networks or grid-based discretizations. In the following, we divide the discussion into two parts: (1) discretization-based methods, and (2) neural network-based methods.

\subsection{Discretization-based method}
For discrete inputs, each measure $\mu_i$ is represented by a sample matrix $X_i \in \bbr^{m_i \times d}$, whose rows correspond to i.i.d.~samples drawn from $\mu_i$. The optimization variable $\lambda^k$ is discretized over a regular grid of size $M$ and represented by the vector $\bm{\rho}^k \coloneqq (\rho^k_1, \dotsc, \rho^k_M )^\top$. We solve a semi-discrete optimal transport problem between $\brho^k$ and each $X_i$ at every iteration, and update $\brho^k$ using the resulting Kantorovich potentials. \Cref{alg:semi} provides the full implementation details for this case. The density update in \Cref{alg:semi} is performed in log-space for numerical stability. The 
$\lsp$ function used in \Cref{alg:semi} is defined as \[\lsp(v,\Delta)=\log\sum_{i=1}^M\Delta\cdot\exp(v_i).\] where $\Delta$ denotes the volume of a single grid cell. This operation normalizes the updated density so that it integrates to one over the grid. In \Cref{alg:semi}, the semi-discrete optimal transport problems are solved using the \texttt{sdot} package \citep{pysdot}. 

For absolutely continuous inputs or histogram inputs (e.g., \Cref{subsec:mnist}), each measure is represented by a probability vector $p_i$ supported on the grid $\by$, satisfying $p_i \ge 0$ and $p_i^\top \bm{1}_M = 1$. The distinction between absolutely continuous measures and histograms lies primarily in the grid resolution: the grid size $M$ is typically large for absolutely continuous measures and relatively small for histogram representations. Accordingly, the Kantorovich potentials are also represented as vectors in $\mathbb{R}^M$, and the objective of the inner optimization \eqref{eq:inner} reduces to
\[
\calL(\brho)=\sum_{i=1}^nw_i\sum_{j=1}^{M}\brho_j\phi_{\brho^{k}\to\mu_i}(y_j)+\frac{1}{\eta_k}\sum_{j=1}^M\brho_j\ln\frac{\brho_j}{\brho^{k}_j}=\sum_{i=1}^nw_i\brho^\top\bphi_{i,k}+\frac{1}{\eta_k}\brho^\top\ln(\brho\oslash\brho_k),
\]
where $\oslash$ denotes element-wise division. Differentiating with respect to $\brho$ yields \[\nabla_{\brho}\calL=\sum_{i=1}^nw_i\phi_{i,k}+\frac{1}{\eta_k}(\bm{1}_M+\ln\brho-\ln\brho_k).\] 
All implementation details for this case are provided in \Cref{alg:hist}. 

\begin{algorithm}[!htbp]
\caption{FRBary for Point Clouds \label{alg:semi}}
\begin{algorithmic}[1]
\Require 
\begin{varwidth}[t]{\linewidth}
Initial barycenter \(\brho^{0}\);\\
Data matrices \(X_1,\dotsc,X_n\);\\
Step sizes $\{\eta_k\}_{k\ge1}$;\\
Maximum number of iterations $T$.
\end{varwidth}\vspace{0.5em}
\For{$k = 0,1, \dotsc, T$}
    \For{$i = 1, \dotsc, n$}
        \State Compute semi-discrete OT between $\brho^{k}$ and $X_i$;
        \State Obtain the Kantorovich potentials $\phi_{i,k}\in\bbr^{m_i}$;
        \State Compute the $c$-transform of $\phi_{i,k}$ on $\by$: $\bphi_{i,k}^c=\bigl(\phi_{i,k}^c(y_1),\dotsc,\phi_{i,k}^c(y_M)\bigr)$;
    \EndFor
    \State Compute the average potential $\bar{\bphi}_k^c=\sum_{i=1}^nw_i\bphi_{i,k}^c$;
    \State Update the density by 
    \begin{align*}
        \log\brho^{k+1}&\gets\log\brho^k-\eta_k\bar{\bphi}_k^c;\\
        \log\brho^{k+1}&\gets\log\brho^{k+1}-\lsp(\log\brho^{k+1},\Delta);
    \end{align*}
\EndFor
\end{algorithmic}
\end{algorithm}

\begin{algorithm}[!htbp]
\caption{FRBary for Histograms \label{alg:hist}}
\begin{algorithmic}[1]
\Require 
\begin{varwidth}[t]{\linewidth}
Initial barycenter \(\brho^{0}\);\\
Input histograms \(p_1,\dotsc,p_n\);\\
Step sizes $\{\eta_k\}_{k\ge1}$;\\
Maximum number of iterations $T$.
\end{varwidth}\vspace{0.5em}
\For{$k = 0,1, \dotsc, T$}
    \For{$i = 1, \dotsc, n$}
        \State Compute OT between $\brho^k$ and $p_i$;
        \State Obtain discrete Kantorovich potentials $\phi_{i,k}\in\bbr^{M}$;
    \EndFor
    \State Compute the average potential $\bar{\bphi}_k=\sum_{i=1}^nw_i\phi_{i,k}$;
    \State Update the density by 
    \begin{align*}
        \log\brho^{k+1}&\gets\log\brho^{k}-\eta_k\left(\bm{1}_M+\bar{\bphi}_k\right);\\
\log\brho^{k+1}&\gets\log\brho^{k+1}-\lsp(\log\brho^{k+1},1);
    \end{align*}
\EndFor
\end{algorithmic}
\end{algorithm}

\subsection{Neural network-based method}
Our proposed algorithm provides a general framework for computing barycenters and is not restricted to a fixed grid. In particular, one can parameterize the log-density or score function of the barycenter using a neural network, and generate samples via standard techniques such as Langevin dynamics or Hamiltonian Monte Carlo. These samples can then be used to compute the semi-discrete OT stochastically \citep{genans2026stochastic} between the current iterate and the input data. In \Cref{alg:nn}, we outline one such implementation in which the log-density is represented by a neural network.

\begin{algorithm}[!htbp]
\caption{FRBary with a Neural Network \label{alg:nn}}
\begin{algorithmic}[1]
\Require 
\begin{varwidth}[t]{\linewidth}
Neural network \(h\);\\
Input measures \(\mu_1,\dotsc,\mu_n\);\\
Step sizes $\{\eta_k\}_{k\ge1}$;\\
Maximum number of iterations $T$.
\end{varwidth}\vspace{0.5em}
\State Initialize $h$ to match the log-density of a chosen distribution;
\State Initialize buffer array $B_x\in\bbr^{m\times d}$ by sampling from $d$-dimentional Sobol sequence;
\State Scale $B_x$ such that it approximately covers the support of input data;
\State Initialize buffer array $B_h\gets h(B_x)$.
\For{$k = 0,1, \dotsc, T$}
    \For{$i = 1, \dotsc, n$}
        \State Sample $Y_i\sim\mu_i$;
        \State Compute semi-discrete OT between $h^{k}$ and $\mu_i$ using SGD;
        \State Obtain the Kantorovich potentials $\phi_{i,k}\in\bbr^{m_i}$;
        \State Compute $\phi_{i,k}^c(x)\gets\min_{y\in Y_i}\mleft\{\norm{x-y}_2^2/2-\phi_{i,k}\mright\}$ for $x\in B_x$;
    \EndFor
    \State Compute the average Kantorovich potential $\bar{\phi}_{k}^c(x)=\sum_{i=1}^nw_i\phi_{i,k}^c(x)$;
    \State Update buffer: $B_h\gets B_h-\eta_k\bar{\phi}^c_k(B_x)$
    \State Update $h$ by minimizing $m^{-1}\norm{h(B_x)-B_h}_2^2$.
\EndFor
\end{algorithmic}
\end{algorithm}

\subsection{Limitations}
In this work, our implementation primarily focuses on 2D and 3D examples. For the discretization-based approach, we are constrained by the available OT solvers. To the best of our knowledge, most widely used OT solvers are designed specifically for 2D or 3D settings, which also cover many classical applications in statistics and imaging.

For the neural network approach, our limited explorations suggest that the method can, in principle, be extended beyond three dimensions; however, both the computational cost and accuracy become less satisfactory in higher dimensions. This difficulty arises from the need to accurately compute semi-discrete OT and to sample effectively from the barycenter density. In low-dimensional settings, simple methods such as unadjusted Langevin dynamics perform well. As the dimension increases, however, the bias introduced by these methods becomes significant. More advanced sampling techniques, such as Hamiltonian Monte Carlo or the No-U-Turn Sampler, can alleviate these issues but are computationally expensive. Overall, extending the method to higher-dimensional settings remains a promising direction, but achieving both computational efficiency and sampling accuracy requires careful investigation, which we leave for future work.

\newpage
\section{Additional Numerical Results}\label{appx:add_detail}

\subsection{Swiss roll}\label{subsec:swiss}
The two-dimensional Swiss roll dataset from \texttt{scikit-learn} \citep{scikit-learn} has been widely used in the Wasserstein barycenter literature as a qualitative benchmark \citep{korotin2021continuous, visentin2025computing, mbg2025}. In the following example, we generate $n=4$ input Swiss rolls, and sample $10000$ points from each. The input distributions are obtained by applying random linear transformations, for which the true Wasserstein barycenter can be computed. The estimated barycenter density is discretized on a $100\times 100$ grid. We initialize our algorithm with a uniform density and run it for $T=150$ iterations, with a decaying learning rate $0.6\times k^{-0.1}$. Afterwards, points are sampled from the estimated density using simple rejection sampling. No additional data are sampled during the optimization. Our results are shown in \Cref{fig:swiss_data} and \Cref{fig:swiss_density}.

\begin{figure}[!htbp]
\centering
    \begin{subfigure}[t]{0.48\linewidth}
        \centering
        \includegraphics[width=\linewidth]{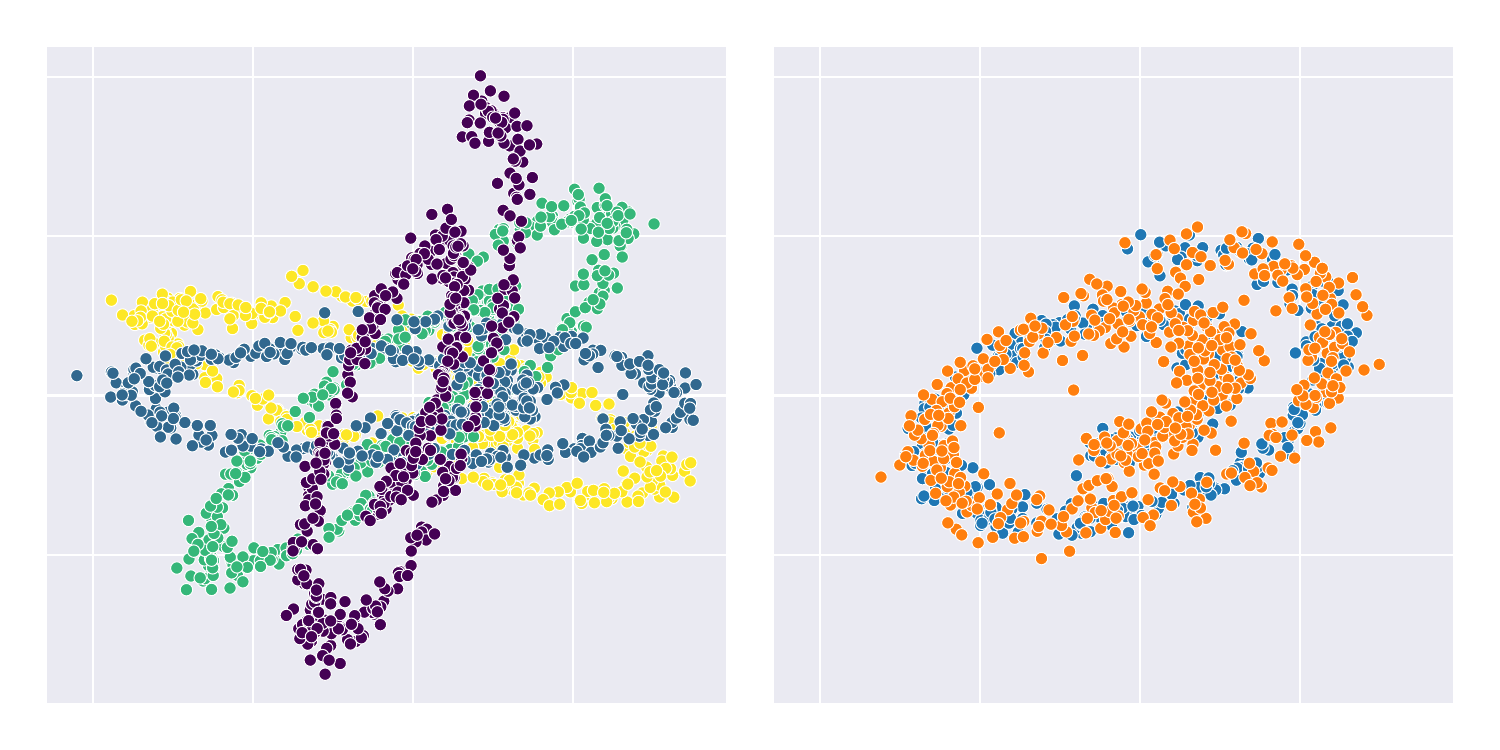}
        \caption{(\textbf{Left}): Samples from the input swiss rolls; (\textbf{Right}): Samples from the true (\textcolor{tab10blue}{blue}) and estimated (\textcolor{tab10orange}{orange}) barycenter.}
        \label{fig:swiss_data}
    \end{subfigure}%
    \hspace{0.01\linewidth}
    \begin{subfigure}[t]{0.48\linewidth}
        \centering
        \includegraphics[width=\linewidth]{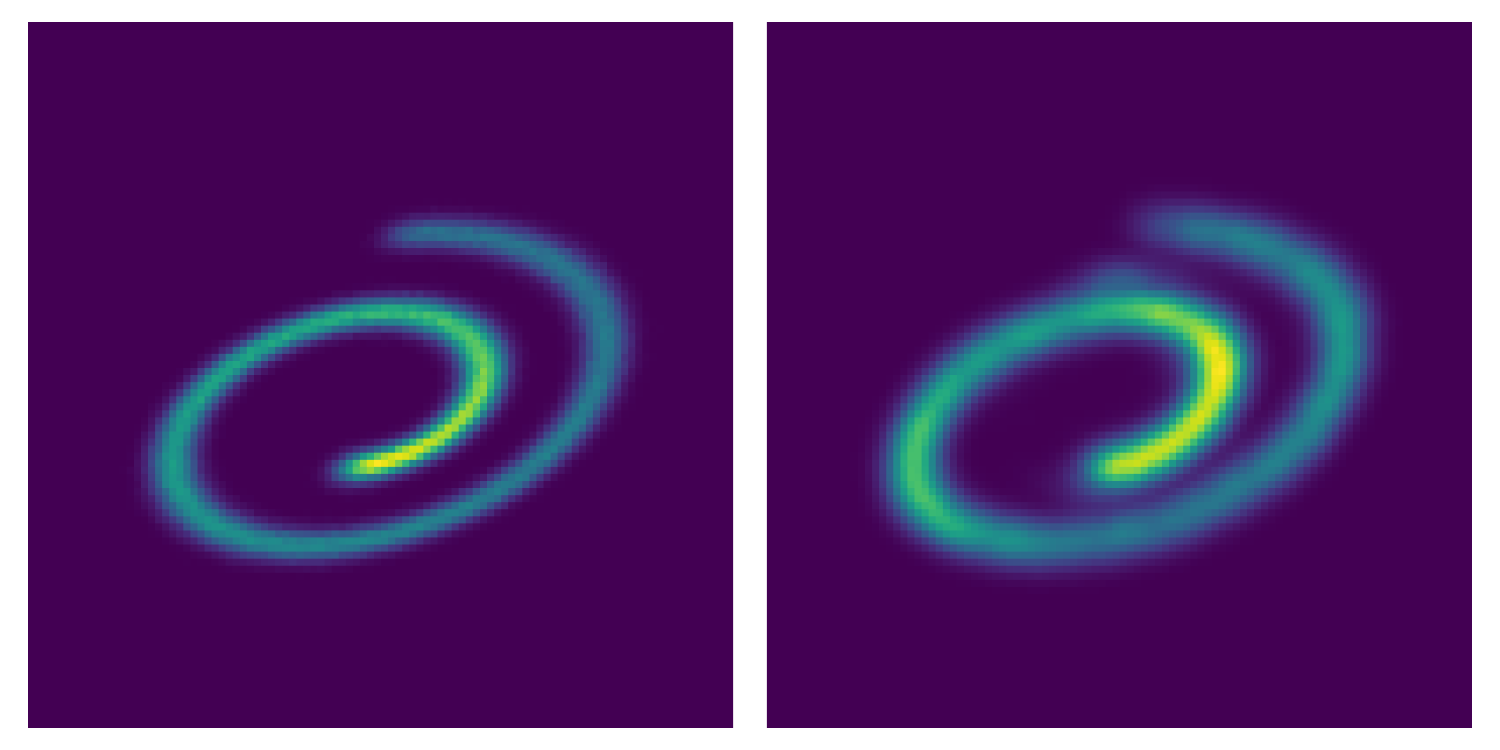}
        \caption{(\textbf{Left}): True swiss roll barycenter density; (\textbf{Right}): Estimated barycenter density.}
        \label{fig:swiss_density}
    \end{subfigure}
    \caption{Swiss roll.}
\end{figure}

\paragraph{Comaprison with free-support approaches.} Using the Swiss roll data, we further compared our method (with both discretization and neural network-based method) against several representative free-support barycenter approaches: the neural network–based CW2B \citep{korotin2021continuous}, as well as exact and entropic solvers from the POT package. We computed the barycenter of 4 Swiss roll distributions, each consisting of 10000 points; the results are shown in \Cref{fig:comp_free}.

\begin{figure}[!htbp]
\centering
\includegraphics[width=0.9\linewidth]{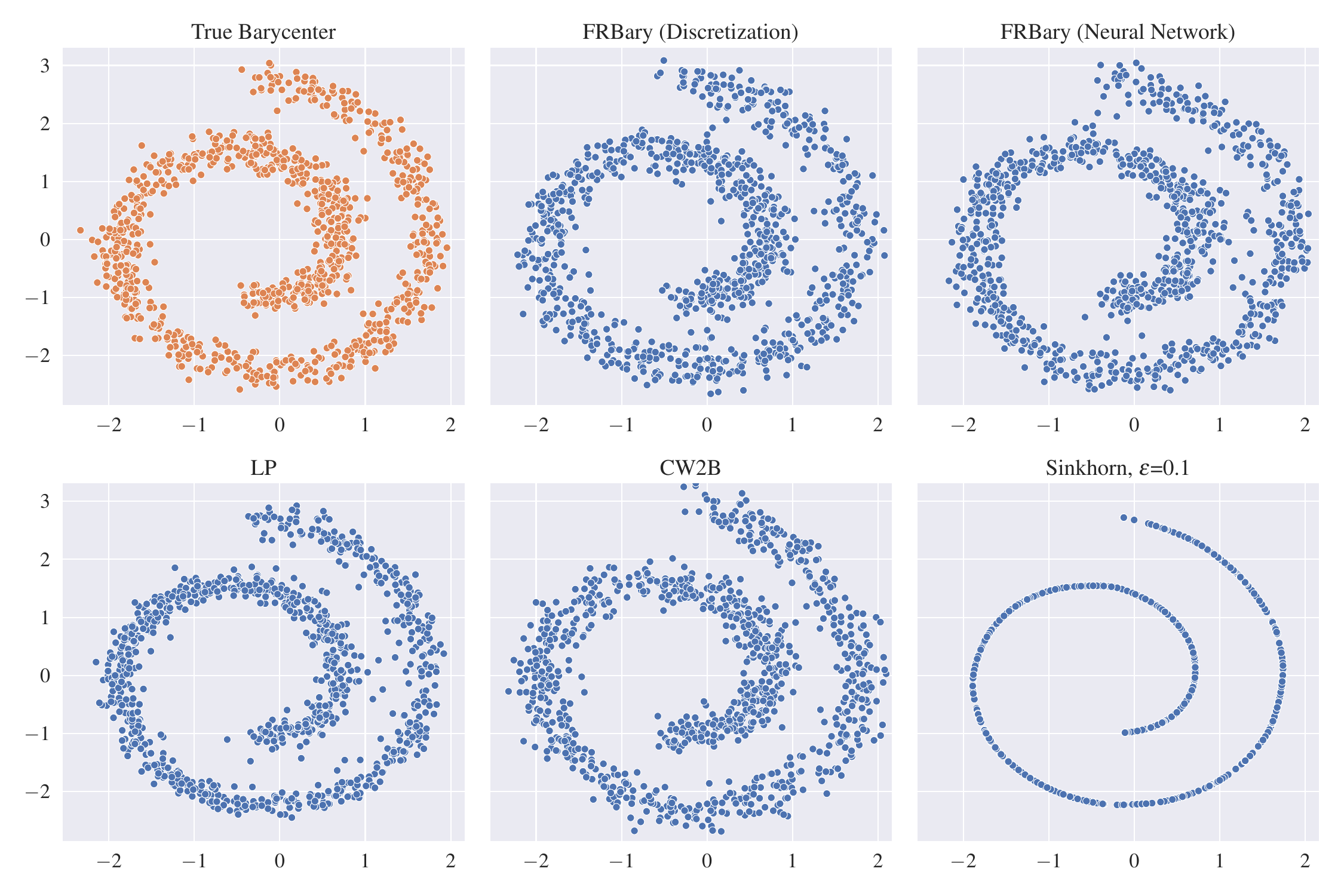}
    \caption{Comparison with various free-support barycenter methods.}
    \label{fig:comp_free}
\end{figure}

For all methods, we used the default parameters provided in their respective implementations. The neural FRBary and the CW2B models were trained on a single NVIDIA 5090 GPU, while all other methods were run on CPUs. Notably, the Sinkhorn approach incurred substantial computational cost and produced results that deviated significantly from the ground truth, consistent with \Cref{subsec:mix}. The computational costs are summarized in \Cref{tab:free_solver}. We also report the sliced Wasserstein distance (SWD) between each converged solution and samples drawn from the true barycenter.

\begin{table}[!htbp]
\centering
\begin{tabular}{l|rrr}
\hline\hline
Solver & Iterations & Time (minutes) & SWD \\\hline
FRBary (Discrete) & 300  & 11 & 0.0272  \\
FRBary (NN) & 200  & 15 & 0.0635  \\
Exact & 100 & 35 & 0.0255  \\
CW2B & 10000  & 17 & 0.0476  \\
Sinkhorn ($\varepsilon=0.1$) & 100 & $>270$ & 0.0516 \\\hline                  
\end{tabular}
\caption{Computation costs of free-support solvers.
\label{tab:free_solver}}
\end{table}

\subsection{MNIST: scalability with sample size}\label{subsec:mnist_scale}
To test the scalability of our method with respect to the sample size $n$, we further compute barycenters for all $10$ digits using $n=500$ samples each; the results are shown in \Cref{fig:mnist_10dig}. The computation of the Kantorovich potentials is parallelized across the input, and each barycenter requires approximately 5 minutes to compute. In contrast, the LP-based implementation we use fails in this setting due to memory limitations.

\begin{figure}[!htbp]
\centering
\includegraphics[width=0.75\linewidth]{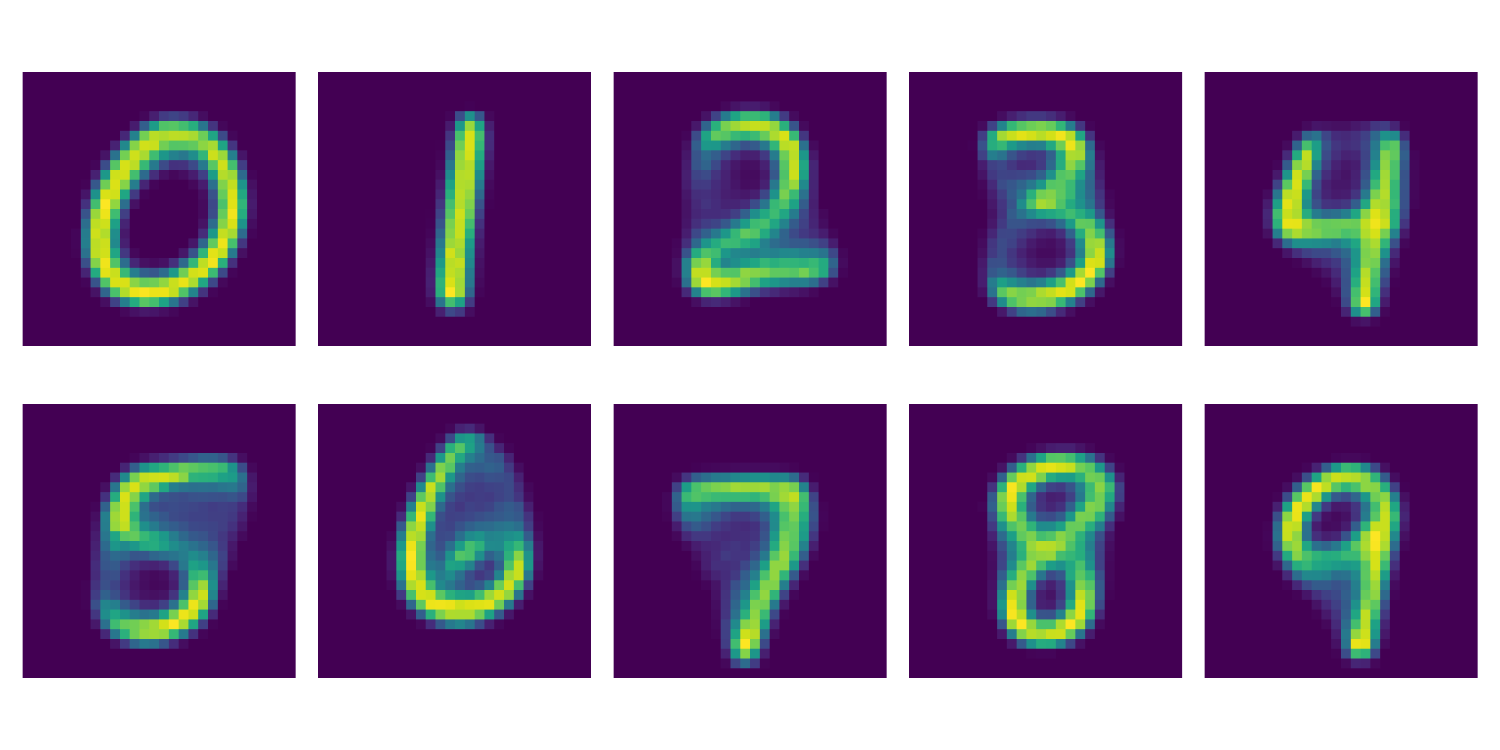}
    \caption{Barycenters computed from $n=500$ images.}
    \label{fig:mnist_10dig}
\end{figure}

\subsection{Comparison on high-resolution digit data} In this experiment, we compute the barycenter of 10 high-resolution ($100\times 100$ and $200\times 200$) handwritten digit ``3'' (\Cref{fig:input_dg3}), obtained by downsampling the HWD+ dataset \citep{beaulac2022introducing}. We compare against three baselines: sGS-ADMM \citep{yang2021fast}, debiased Sinkhorn \citep{pmlr-v119-janati20a}, and WDHA \citep{KimYaoZhuChen2025_barycenter-nonconvex-concave}. Both sGS-ADMM and WDHA are exact barycenter solvers.

\begin{figure}[!htbp]
\centering
\includegraphics[width=0.8\linewidth]{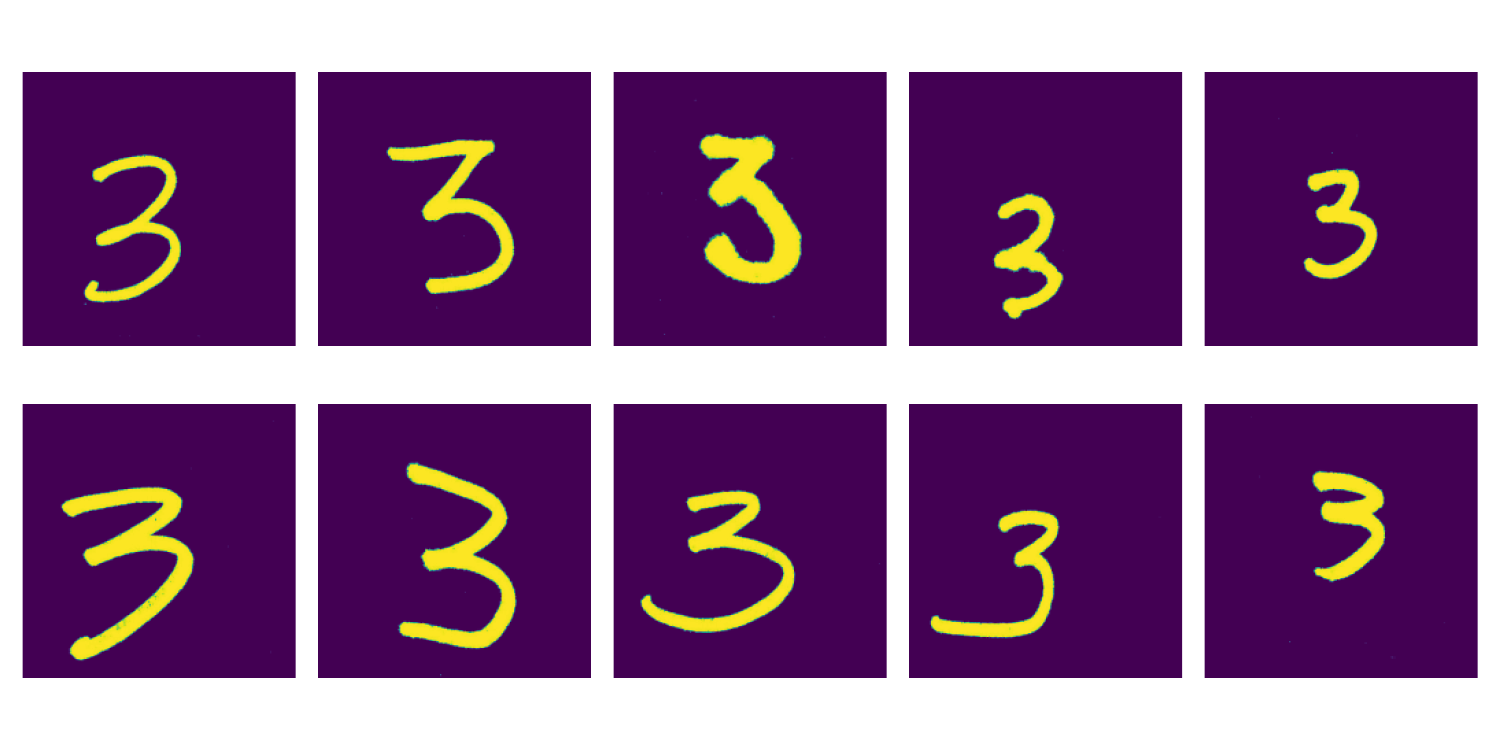}
\caption{10 handwritten digit ``3'' from the HWD+ dataset.}
\label{fig:input_dg3}
\end{figure}

The estimated barycenters are shown in \Cref{fig:bary_digit3}. In both resolutions, our method and sGS-ADMM produce visually superior barycenters. However, the higher-resolution sGS-ADMM result may exhibit artifacts compared to its lower-resolution counterpart, suggesting slight convergence issues.

\begin{figure}[!htbp]
\centering
    \begin{subfigure}[b]{0.8\linewidth}
        \centering
        \includegraphics[width=\linewidth]{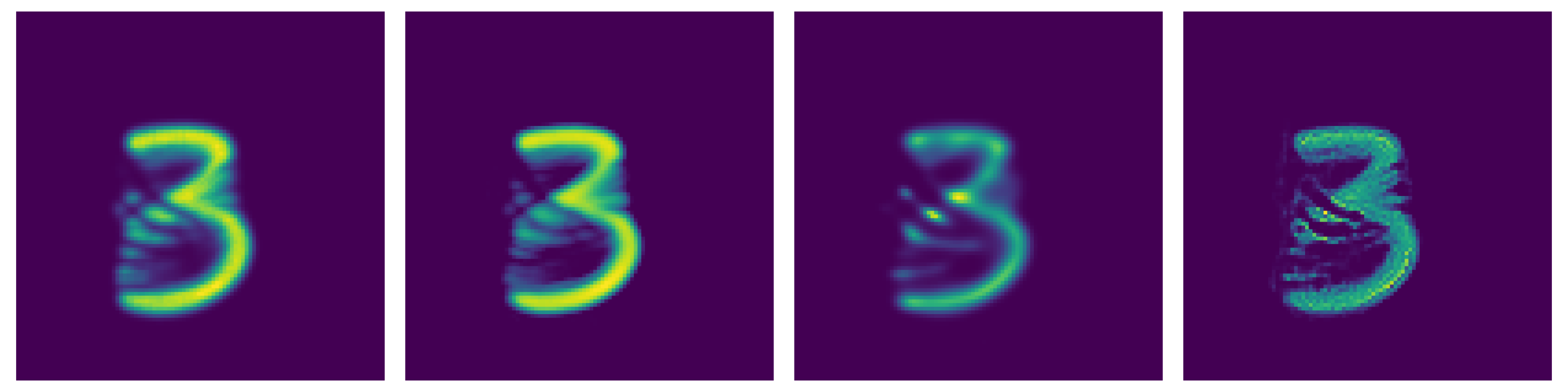}
        \caption{Barycenters of $100\times 100$ images.}
    \end{subfigure}%
    \\
    \begin{subfigure}[b]{0.8\linewidth}
        \centering
        \includegraphics[width=\linewidth]{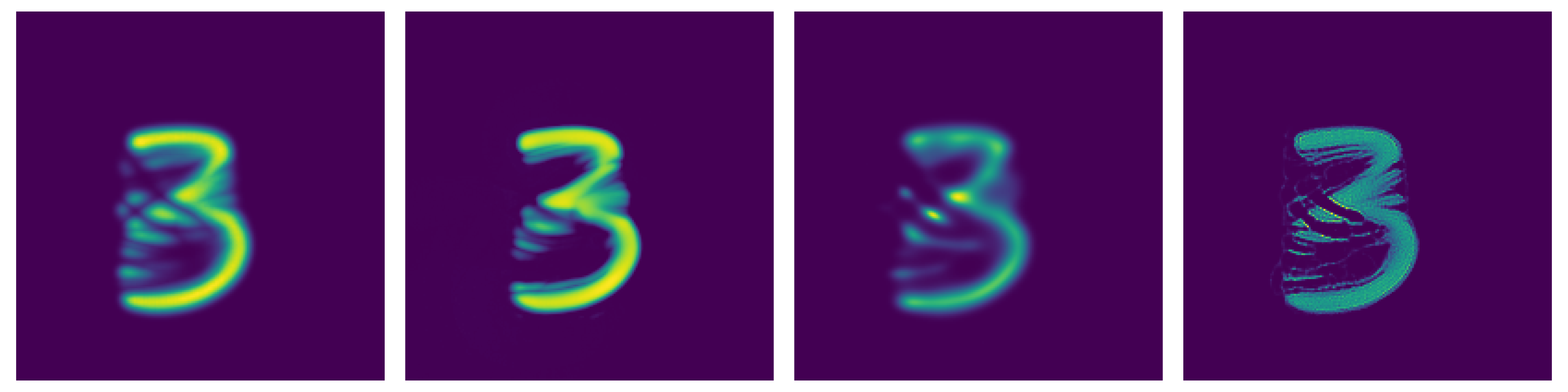}
        \caption{Barycenters of $200\times 200$ images.}
    \end{subfigure}
    \caption{\textbf{Left to right}: Barycenters computed through FRBary, sGS-ADMM, debiased Sinkhorn ($\lambda=0.005$), and WDHA.}
    \label{fig:bary_digit3}
\end{figure}

Computation times are summarized in \Cref{tab:hist_solver}. For sGS-ADMM, we ran the algorithm until the default convergence threshold ($10^{-5}$) was reached. For debiased Sinkhorn, we set the regularization parameter to $0.005$. For WDHA, we used the default parameters provided by the authors. 

\begin{table}[!htbp]
\centering
\begin{tabular}{l|rr|rr}
\hline\hline
\multirow{2}{*}{Solver} & \multicolumn{2}{c|}{$100\times 100$ Images} & \multicolumn{2}{c}{$200\times 200$ Images} \\\cline{2-5}
 & Iterations & Time (seconds) & Iterations & Time (seconds)  \\\hline
FRBary & 200  & 147 & 200 & 423 \\
sGA-ADMM & 1650 & 473 & 2950 & 16470 \\
Debiased Sinkhorn & 100 & 0.84 & 100 & 6.73\\
WDHA & 1000 & 5.5 & 1000 & 67 \\\hline   
\end{tabular}
\caption{Computation costs of histogram-based solvers.
\label{tab:hist_solver}}
\end{table}

Our method exhibits a clear computational advantage over sGS-ADMM: solving the higher-resolution barycenter with sGS-ADMM required more than 4.5 hours. Both our method and WDHA utilize Sobolev gradient based algorithm \citep{kim2025sobolev, jacobs2021back}, which provides strong scalability. In particular, when all measures are supported on a regular grid of size \(M\), the per-iteration complexity of our method scales as \(O(n M \log M)\), with memory complexity $O(M)$. In contrast, sGS-ADMM has per-iteration complexity \(O(nM^2)\) and memory complexity $O(M^2)$. As a result, sGS-ADMM becomes impractical for higher-resolution inputs; for $250\times 250$ images, the provided implementation requires more than 300 GB of memory.

\subsection{Color palette averaging with discretized barycenter.}
For the color-averaging experiment, we also explored a discretization-based implementation. The images were resized to $1920\times1122$ and $1920\times1180$, respectively. The barycenter was discretized on a $50\times50\times50$ voxel grid. Due to the large input size ($\sim2\times 10^6$ pixels per image), the barycenter was estimated stochastically by sampling $1000$ pixels at each iteration. The algorithm was run for 150 iterations using a learning rate schedule of $4\times k^{-0.2}$. 

After convergence, we computed a semi-discrete OT map between the final estimate and the inputs; pixels were then mapped to the centroids of their corresponding Laguerre cells. The results are shown in \Cref{fig:color_avg_fixed}, while \Cref{fig:color_palette} visualizes the color distributions of the input images and the estimated barycenter. Compared to the neural network approach in \Cref{fig:color_avg}, the transferred images exhibit lower visual quality and noticeable artifacts. Furthermore, because the target points are resampled at every iteration, warm-start strategies for semi-discrete OT cannot be used, resulting in a total runtime of approximately 60 minutes.

\begin{figure}[!htbp]
\centering
\begin{subfigure}[t]{0.48\textwidth}
\centering
\includegraphics[height=0.48\textwidth,frame]{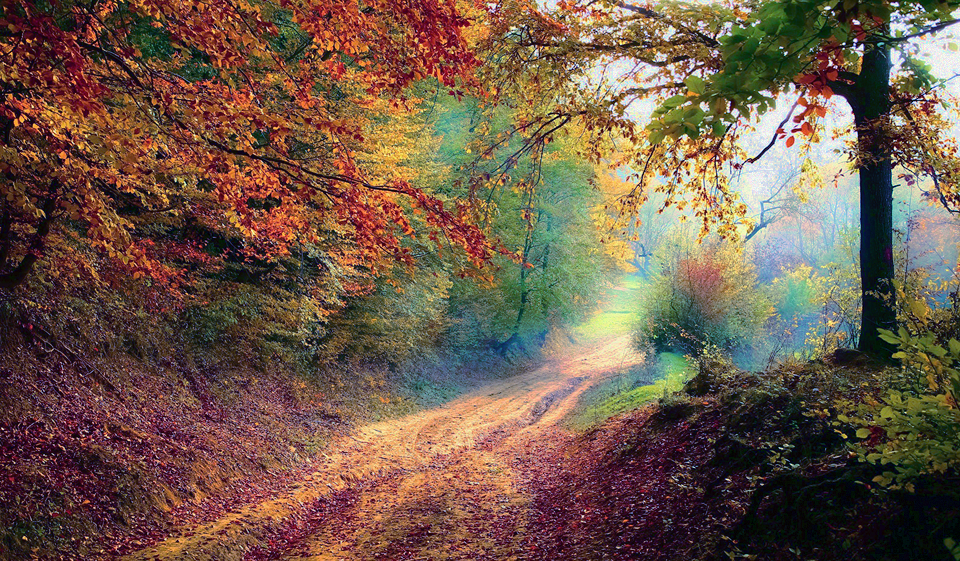}
\end{subfigure}
\hspace{0.01\textwidth}
\begin{subfigure}[t]{0.48\textwidth}
\centering
\includegraphics[height=0.48\textwidth,frame]{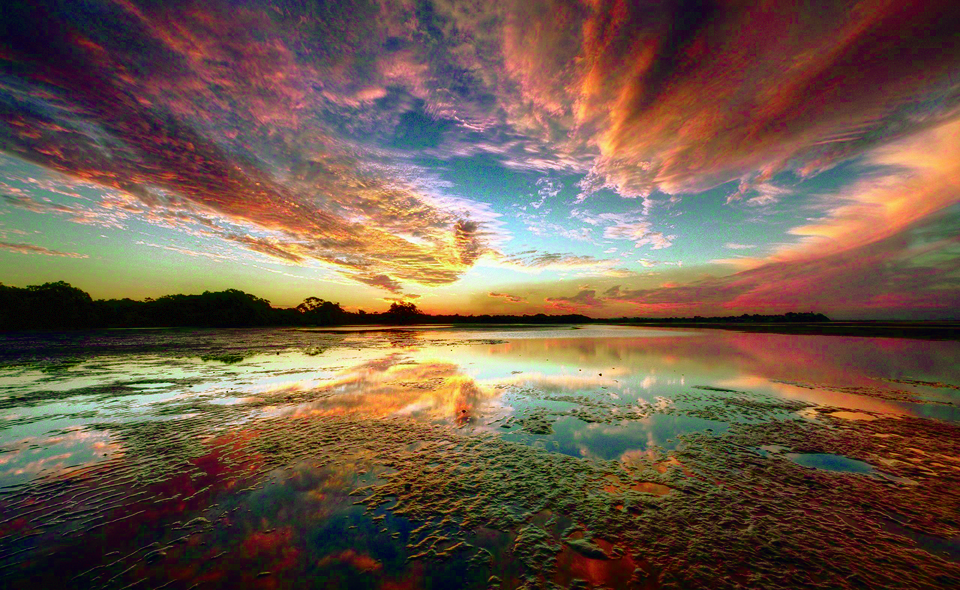}
\end{subfigure}
\caption{Results of color averaging with discretization-based barycenter.}
\label{fig:color_avg_fixed}
\end{figure}

\begin{figure}[!htbp]
\centering
\includegraphics[width=0.95\linewidth]{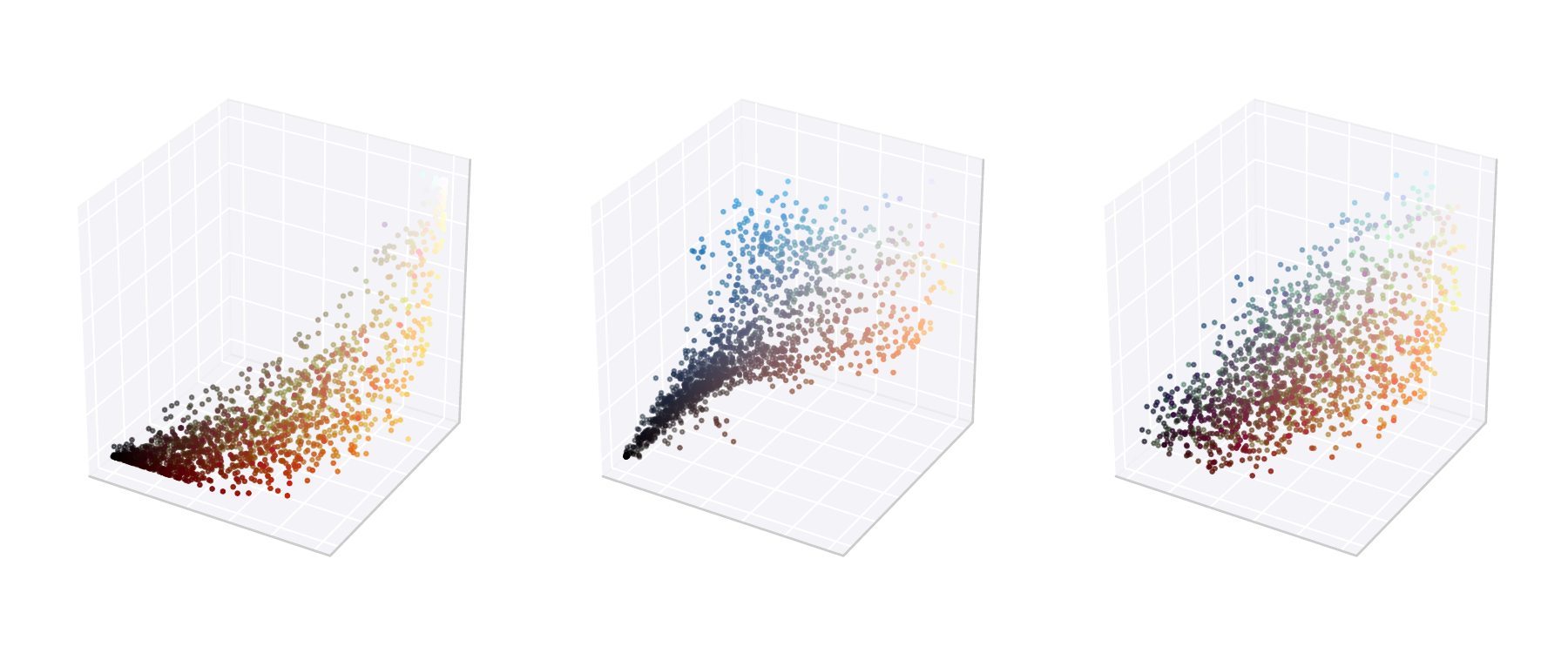}
    \caption{Color palettes of the input images (left and middle) and the barycenter (right).}
    \label{fig:color_palette}
\end{figure}

\subsection{Verifying the convergence rate} 
\Cref{fig:opt_gap} plots the objective gap against the iteration count for the 2D and 3D Gaussian setting (\Cref{subsec:num_gauss}), in which the optimal functional value can be computed in closed form. The empirical decay is bounded by $O(T^{-1/2})$, in agreement with \Cref{thm:discrete}.

\begin{figure}[!htbp]
\centering
\includegraphics[width=0.9\linewidth]{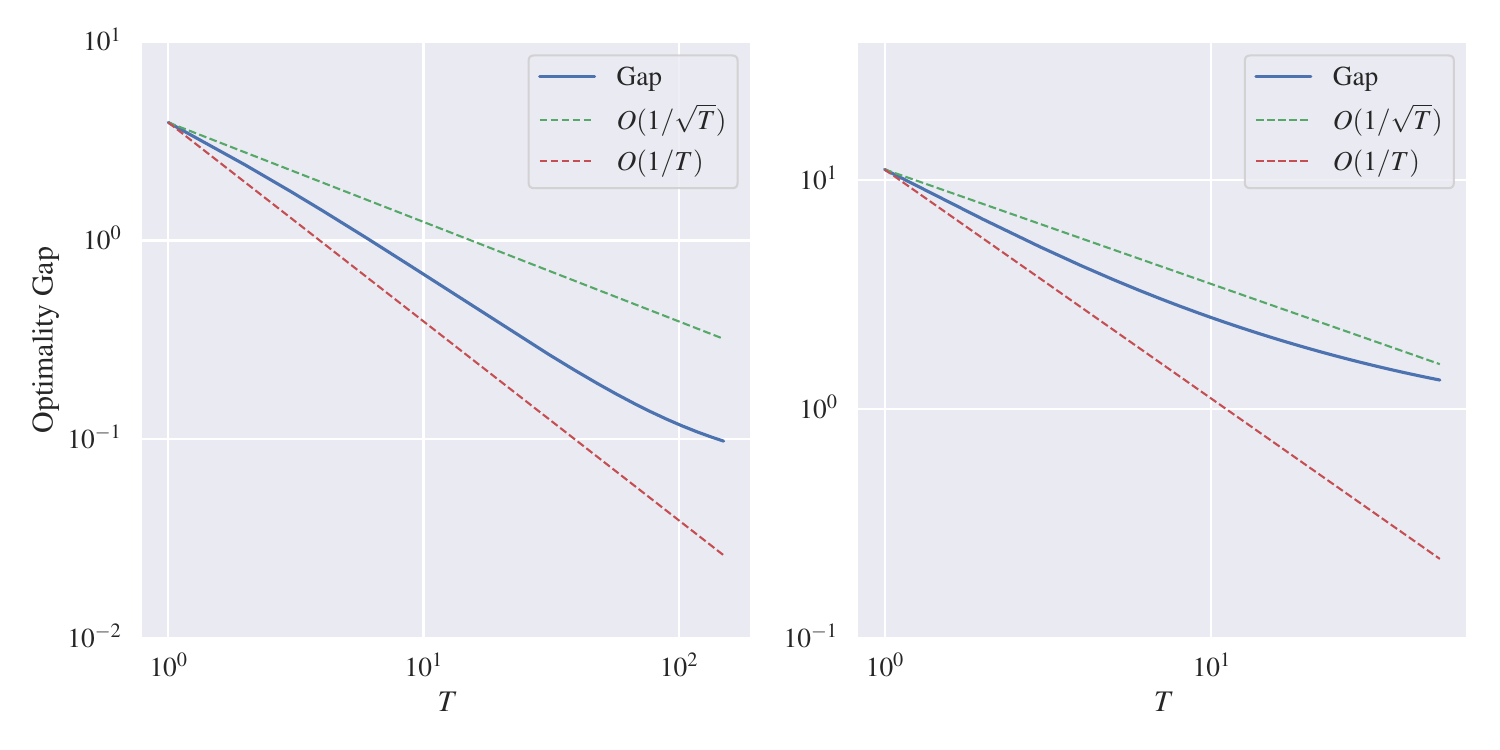}
    \caption{Optimality gap with variable step sizes.}
    \label{fig:opt_gap}
\end{figure}

\subsection{Miscellaneous}
\paragraph{Computation time.} \Cref{tab:time_table} reports the total elapsed time for several experiments presented in \Cref{sec:exp} and \Cref{appx:add_detail}. We observe that the time required to solve the optimal transport subproblems increases as the barycenter approaches convergence. Since the exact solver in the \texttt{POT} package does not support warm starts, the computation becomes increasingly expensive in later iterations.

\begin{table}[!htbp]
\centering
\begin{tabular}{r|lllll}
\hline\hline
Experiment  & $n$ & $m$ & $M$ &  $T$ & Elapsed time \\\hline
2D Gaussian &   $4$ & $10000$ & $100\times100$ & $125$ & $\sim4$ mins $40$ secs  \\
3D Gaussian &   $3$ & $3000$  & $50\times50\times50$ & $100$ & $\sim12$ mins $6$ secs   \\
Swiss Rolls &   $4$ & $10000$ & $100\times100$ & $150$ & $\sim4$ mins $50$ secs \\
MNIST       &  $10$ & $3000$  & $28\times28$ & $350$ &  $\sim9$ mins\\
MNIST       & $500$ & $3000$  & $28\times28$ & $250$ & $\sim5$ mins (Parallel) \\
Color Averaging  & $2$  & $1024$ (Stochastic) & $50\times50\times50$ & 150 & $\sim11$ mins $11$ secs \\\hline
\end{tabular}
\caption{Settings and total runtime for the experiments in \Cref{sec:exp}.}
\label{tab:time_table}
\end{table}

\paragraph{Hardware information.} All experiments in this work, except for one, were conducted on a machine equipped with a single Intel Core i9-14900K CPU, an NVIDIA 5090 GPU, and 32 GB of RAM. Due to its substantial memory requirements, the $200\times 200$ sGS-ADMM experiment was instead run on a compute node equipped with an AMD EPYC 7713 CPU and 256 GB of RAM.

\paragraph{License of assets.} The images used in \Cref{subsec:num_color} are obtained from \citeauthor{wallpaper}. The \texttt{POT} and \texttt{pysdot} packages are distributed under the terms of the MIT license. The \texttt{OTT} package is distributed under the Apache-2.0 license. The repository for the back-and-forth OT solver does not explicitly specify a software license.

\end{document}